\documentclass[12pt]{amsart}
\usepackage[letterpaper,top=3cm,bottom=3cm,left=3.25cm,right=3.25cm,marginparwidth=1.75cm]{geometry}
\usepackage[english]{babel}
\usepackage{graphicx}
\usepackage{xurl}
\graphicspath{{./}}

\usepackage[T1]{fontenc}
\usepackage[utf8]{inputenc}
\usepackage{lmodern}

\usepackage{algorithm}
\usepackage{algpseudocode}

\usepackage{xcolor}

\algrenewcommand\algorithmicrequire{\textcolor{blue}{\textbf{Input:}}}
\algrenewcommand\algorithmicensure{\textcolor{blue}{\textbf{Output:}}}
\algrenewcommand\algorithmiccomment[1]{%
  \hfill\textcolor{red}{$\triangleright$}~{\footnotesize #1}}
\newcommand{\algrule}{\vspace{1pt}\hrule height 0.7pt\vspace{4pt}}
\usepackage{tikz}
\usetikzlibrary{arrows.meta,positioning}
\usepackage{amsmath,amsthm,amssymb,amsfonts,amsopn}
\numberwithin{equation}{section}
\usepackage{booktabs}
\usepackage{array}
\usepackage{float}
\usepackage{placeins}
\usepackage{needspace}
\usepackage{enumitem}
\newtheorem{theoremp}{Theorem}[section]
\newenvironment{theorem}{\vspace{5pt}\begin{theoremp}}{\end{theoremp}\vspace{5pt}}
\newtheorem{informaltheoremp}[theoremp]{Informal Proposition}

\newtheorem{propositionp}[theoremp]{Proposition}
\newenvironment{proposition}{\vspace{5pt}\begin{propositionp}}{\end{propositionp}\vspace{5pt}}
\newtheorem{corollaryp}{Corollary}[section]
\newenvironment{corollary}{\vspace{5pt}\begin{corollaryp}}{\end{corollaryp}\vspace{5pt}}
\newtheorem{lemmap}[theoremp]{Lemma}
\newenvironment{lemma}{\vspace{5pt}\begin{lemmap}}{\end{lemmap}\vspace{5pt}}
\newtheoremstyle{assumptionstyle}
    {12pt plus 3pt minus 2pt}
    {\topsep}
    {\normalfont}
    {}
    {\itshape}
    {.}
    {.5em}
    {}
\theoremstyle{assumptionstyle}
\newtheorem{asu}{Assumption}

\newtheoremstyle{boldremarkstyle}
    {\topsep}
    {\topsep}
    {\normalfont}
    {}
    {\bfseries}
    {.}
    {.5em}
    {}
\theoremstyle{boldremarkstyle}
\newtheorem{remarkp}{Remark}[section]
\newenvironment{remark}{\vspace{5pt}\begin{remarkp}}{\end{remarkp}\vspace{5pt}}

\theoremstyle{definition}
\newtheorem{examplep}[theoremp]{Example}
\newenvironment{example}{\vspace{5pt}\begin{examplep}}{\end{examplep}\vspace{5pt}}

\newcommand{\stepheading}[2]{%
  \par\smallskip\noindent{\normalfont\bfseries Step #1}%
  \if\relax\detokenize{#2}\relax\else{\normalfont\ (#2)}\fi. }
\newenvironment{step1}[1][]{\stepheading{1}{#1}}{\par\smallskip}
\newenvironment{step2}[1][]{\stepheading{2}{#1}}{\par\smallskip}
\newenvironment{step3}[1][]{\stepheading{3}{#1}}{\par\smallskip}
\newenvironment{step4}[1][]{\stepheading{4}{#1}}{\par\smallskip}

\providecommand{\doi}[1]{\href{https://doi.org/#1}{\nolinkurl{https://doi.org/#1}}}

\makeatletter
\def\subsubsection{\@startsection{subsubsection}{3}%
  \z@{.5\linespacing\@plus.7\linespacing}{-.5em}%
  {\normalfont\bfseries}}
\def\paragraph{\@startsection{paragraph}{4}%
  \z@\z@{-\fontdimen2\font}%
  {\normalfont\bfseries\boldmath}}
\makeatother

\usepackage{caption}
\usepackage{subcaption}
\usepackage[colorlinks=true,linkcolor=blue,citecolor=green,urlcolor=green]{hyperref}
\newenvironment{paperfigure}[1][htbp]{\begin{figure}[#1]}{\end{figure}}
\makeatletter
\def\@adminfootnotes{%
  \let\@makefnmark\relax \let\@thefnmark\relax
  \ifx\@empty\@date\else \@footnotetext{\@setdate}\fi
  \ifx\@empty\thankses\else \@footnotetext{%
    \def\par{\let\par\@par}\@setthanks}\fi
  \ifx\@empty\@subjclass\else \@footnotetext{\@setsubjclass}\fi
  \ifx\@empty\@keywords\else \@footnotetext{\@setkeywords}\fi}
\makeatother
\title[Singular parameters and missing limits in neural PDE solvers]{Singular Parameters and Missing Limits in Neural PDE Solvers}
\hypersetup{pdftitle={Singular Parameters and Missing Limits in Neural PDE Solvers}}
\author[Daniel Fern\'andez]{Daniel Fern\'andez\textsuperscript{$\ast$}}
\thanks{\textsuperscript{$\ast$}Chair for Dynamics, Control, Machine Learning, and Numerics (Alexander von Humboldt Professorship), Department of Mathematics, Friedrich-Alexander-Universit\"at Erlangen-N\"urnberg, 91058 Erlangen, Germany.\newline
\emph{Email:} \texttt{daniel.fernandez@fau.de}.}
\date{}
\subjclass[2020]{41A30, 41A63, 46E35, 65D12, 68T07}
\keywords{Neural PDE solvers, realization classes, feature collisions,
confluent closure, derivative atoms, parameter escape}
\begin{document}
\begin{abstract}
Neural solvers for partial differential equations (PDEs) can approach an
accurate solution while their parameters grow without bound. In such cases,
the limiting solution may have no finite representation in the chosen model,
leaving the best loss unattained.
Our analysis connects missing limits in deep neural tanh-networks to unbounded
hidden parameters or increasingly redundant neurons. For a class of models
built from translated kernels, we describe the
missing functions and recover them by adding kernel derivatives to the model.
This completion makes the best approximation attainable under standard
assumptions. Numerical studies follow the associated parameter growth and
explore how completion affects PDE optimization.
\end{abstract}
\maketitle
\section{Introduction}\label{sec:NPDE}

Neural PDE solvers can approach a smooth solution that no finite choice of
model parameters represents, while their weights diverge and their neurons
become nearly identical \cite{mahan,petersen}. The best loss can therefore remain
unattained, even with exact arithmetic and ideal optimization
\cite{BertoluzzaBurmanHe2023WAN}. This possibility matters for both
energy-based and residual-based solvers
\cite{EYu18,KarniadakisNRP21,Raissi19,SirignanoSpiliopoulos18}.

This is the \emph{coercivity gap}: control of the computed solution need
not give control of its parameters
\cite{Zuazua2026CoercivityGap,Zuazua2026SIAMNews}. A simple collision
explains how it arises. Two shifted features with large opposing
coefficients can approach a derivative as their centres merge. We show how adding the
derivative as a feature makes the limit available. We call this
\emph{completion}.

To our knowledge, this is the first systematic computational study of the
coercivity gap and the use of completion to address it in neural PDE solvers. Building on the theoretical account
and two-neuron illustration in \cite{Zuazua2026CoercivityGap}, we combine
new analysis with numerical studies of parameter growth, missing limits,
and the effect of completion on optimization.

\subsection{Contributions}
\label{subsec:introduction_contributions}
The main contributions are:

\begin{itemize}[leftmargin=2.5em,labelsep=0.6em]
\item \textbf{Identifying singular representations.}
For deep neural tanh-networks, approaching a missing limit forces hidden parameters
to escape or hidden layers to degenerate
(Theorem~\ref{thm:tanh_forced_collision}). For translated feature architectures,
approaching a missing limit likewise forces the coefficient norm to diverge
and feature centres to collide.

\item \textbf{Recovering missing optimal states.}
We identify the missing states that can prevent attainment by characterizing
every weak limit of a fixed number of translates of a sufficiently smooth
kernel on the real line
(Theorem~\ref{thm:translated_feature_geometry}). Nonattainment
persists under small target perturbations, and we quantify the weight
growth needed to approach the optimal loss
(Theorem~\ref{thm:robust_nonattainment}).

\item \textbf{Completion algorithm.}
Derivative features restore attainment under the stated variational
assumptions (Corollary~\ref{cor:completed_variational_attainment}).
We propose a completion algorithm that replaces nearly colliding features
by derivative blocks
(Algorithm~\ref{alg:basic_completion}).

\item \textbf{Testing what completion changes.}
Numerical studies track collisions and check the predicted coefficient-growth
law. Manufactured PDE comparisons train kernel weights and
compare standard optimization with completion with derivative features.
\end{itemize}

\subsection{Related work}
\label{subsec:introduction_related_work}
We place our results in the context of approximation theory, neural PDE
solvers, and derivative completion.

\noindent\textbf{Approximation and nonattainment.}
Fixed-size neural realization classes can be nonclosed, and nearby
realized functions need not admit nearby parameter representations
\cite{petersen}. Nonclosedness also occurs in Sobolev norms, where
derivative limits can be approached through parameter growth \cite{mahan}.
Best empirical-risk approximations may fail to exist even for shallow
networks \cite{LimMichalekQi2022}. An analogous
obstruction occurs in fixed-rank tensor approximation, where nonclosed
rank constraints can prevent attainment \cite{deSilvaLim2008}. These
results motivate asking which limits are missing from a fixed model and
how fast its coefficients must grow to approach them.

\noindent\textbf{Neural PDE solvers and training.}
The Deep Ritz method minimizes variational energies \cite{EYu18}, while
physics-informed neural networks (PINNs) and the deep Galerkin method (DGM)
use PDE residuals together with boundary or data losses
\cite{Raissi19,SirignanoSpiliopoulos18}. Convergence analyses
\cite{ShinDarbonKarniadakis20,Doum_che_2023} and stability-based
generalization estimates \cite{MishraMolinaro23} relate losses and
sampling to solution accuracy under their respective assumptions.
Training studies identify numerical stiffness and unbalanced gradients
\cite{WangTengPerdikaris21SISC}, as well as different convergence rates
of loss components through neural tangent kernel analysis
\cite{WangYuPerdikaris20NTK}. Studies of neuron condensation describe
the emergence of groups with similar features during training
\cite{condensation3,condensation1,condensation2}. The coercivity gap
\cite{Zuazua2026CoercivityGap,Zuazua2026SIAMNews} connects these
representation phenomena to PDE objectives: states can converge while
parameters diverge. We examine whether the fixed model contains the
limiting state and whether its best loss is attained.

\noindent\textbf{Derivative completion.}
Derivative completion has classical roots in the theory of
\(\gamma\)-polynomials and nonlinear best approximation
\cite{deBoor1969,Braess1986}. Girosi and Poggio explicitly proposed
adding parameter derivatives to radial basis families with movable centres
to recover the best-approximation property
\cite{GirosiPoggio1989Report,GirosiPoggio1990}.
Within this framework, we identify all weak limits of Sobolev
translates with unrestricted centres, controlling both collisions and
escape despite divergent coefficients.
\FloatBarrier

\section{Preliminaries}
\label{sec:boundary_states}

We first show how neural states can converge while their parameters diverge,
then introduce the variational formulation and model classes used in the
analysis.

\subsection{A shallow-\texorpdfstring{\(\tanh\)}{tanh} example}

Two nearly identical neurons can produce an increasingly accurate state
while their output weights diverge. The following example shows how this
collision can prevent attainment of a coercive PDE objective at fixed width.

\label{sec:motiv_example}

Let \(d\ge1\), let \(\Omega\subset\mathbb R^d\) be a bounded Lipschitz domain,
and set
\[
u^\ast(x):=\operatorname{sech}^2(x_1).
\]
Fix \(P\ge2\). For \(0<\varepsilon\le1\), the width-two shallow network
\begin{equation}
\label{eq:tanh_motivating_difference_quotient}
u_\varepsilon(x)
:=\frac{\tanh(x_1+\varepsilon)-\tanh(x_1-\varepsilon)}{2\varepsilon}
\end{equation}
is also a width-\(P\) network after zero-weight padding. Its slopes are fixed
and its biases lie in \([-1,1]\), so its hidden parameters stay in a compact
set. Smooth Taylor expansion on the bounded preactivation range gives, for every fixed
\(m\ge1\),
\begin{equation}
\label{eq:tanh_motivating_strong_limit}
u_\varepsilon\longrightarrow u^\ast
\qquad\text{strongly in }H^m(\Omega).
\end{equation}
Writing \(e_1\) for the first coordinate vector in \(\mathbb R^d\), the two hidden parameters and outer coefficients are
\[
z_{1,\varepsilon}=(e_1,\varepsilon),\qquad
z_{2,\varepsilon}=(e_1,-\varepsilon),\qquad
c_{1,\varepsilon}=\frac1{2\varepsilon},\qquad
c_{2,\varepsilon}=-\frac1{2\varepsilon}.
\]
Thus
\[
|z_{1,\varepsilon}-z_{2,\varepsilon}|=2\varepsilon\to0,
\qquad
\|(c_{1,\varepsilon},c_{2,\varepsilon})\|_2
=\frac1{\sqrt2\,\varepsilon}\to\infty.
\]

Figure~\ref{fig:tanh_singular_limit} illustrates this explicit family:
the hidden parameters collide while the represented states approach the
regular limit. Shading marks the error of the widest pair.
\begin{paperfigure}[!t]
\centering
\includegraphics[width=\linewidth]{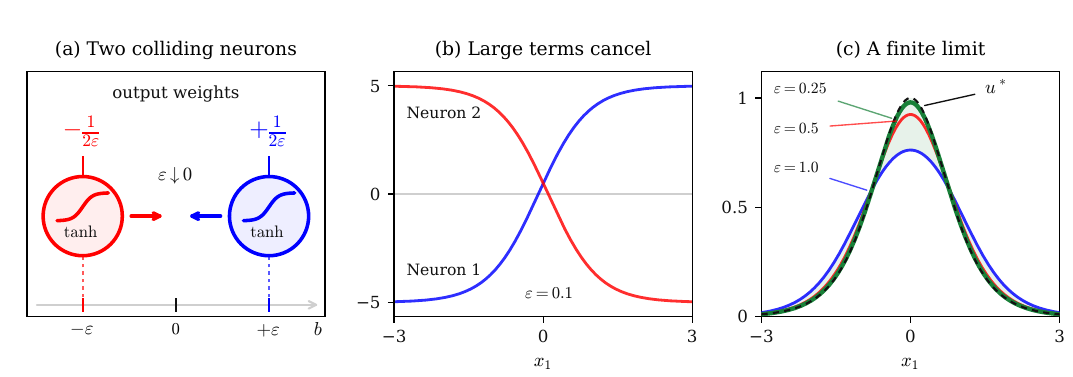}
\caption{Two merging tanh neurons approach a function outside the model.
\textbf{(a)} Weights grow with opposite signs; \textbf{(b)} large contributions
nearly cancel; \textbf{(c)} their sum approaches \(u^\ast\).}
\label{fig:tanh_singular_limit}
\end{paperfigure}

The limit is not represented by any finite shallow-\(\tanh\) network, even if
an output bias is allowed. Indeed, an identity
\[
\operatorname{sech}^2(x_1)
=c_0+\sum_{k=1}^Q c_k\tanh(a_k\cdot x+b_k)
\quad\text{on }\Omega
\]
would first hold pointwise by continuity and then on all of \(\mathbb R^d\)
by real analyticity. Restricting to
\(x=te_1\) would express \(\operatorname{sech}^2t\) as a finite sum of a
constant and terms \(\tanh(\alpha_kt+b_k)\), where \(\alpha_k:=a_k\cdot e_1\).
Every nonconstant term has only
simple poles in its meromorphic continuation, so their finite sum has at most
simple poles. In contrast, \(\operatorname{sech}^2t\) has a double pole at
\(i\pi/2\). The meromorphic identity theorem makes this impossible. Thus, for every
fixed width \(P\ge2\), \(u^\ast\) is a strong \(H^m\) limit of shallow
\(\tanh\) networks but has no such representation.

For a direct elliptic PDE realization, take \(m=1\), let \(\nu\) be the outer unit
normal on \(\partial\Omega\), and write
\[
\partial_\nu u:=\nabla u\cdot\nu.
\]
Set
\[
f:=-\Delta u^\ast+u^\ast,
\qquad
g:=\partial_\nu u^\ast.
\]
Then \(u^\ast\) is the unique minimizer in \(H^1(\Omega)\) of the coercive
Deep Ritz energy
\[
J(u):=\frac12\int_\Omega\bigl(|\nabla u|^2+|u|^2\bigr)\,dx
-\int_\Omega fu\,dx-\int_{\partial\Omega}gu\,dS.
\]
Indeed,
\[
J(u)-J(u^\ast)=\frac12\|u-u^\ast\|_{H^1(\Omega)}^2.
\]
Consequently, restricting \(J\) to this fixed-width shallow-\(\tanh\) class preserves
the state-space infimum but destroys attainment.
Every minimizing parameter sequence therefore escapes, by the
boundary-state principle of \cite{Zuazua2026CoercivityGap}. This example motivates
Theorem~\ref{thm:tanh_forced_collision}, which describes parameter escape
and hidden-layer degeneracy for deep neural tanh-networks of arbitrary fixed depth.

\FloatBarrier


\subsection{Variational and model preliminaries}
\label{subsec:preliminaries}

Let \(H\) be a real Hilbert space, such as \(H^m(\Omega)\) on a bounded
domain or \(H^m(\mathbb R)\), and let \(J:H\to\mathbb R\) be an energy
or residual objective. The variational problem is
\begin{equation}\label{eq:variational_problem}
\min_{u\in H}J(u).
\end{equation}
A neural model is described by a nonempty parameter
set \(\Theta\subseteq\mathbb R^{d_\theta}\) and a continuous realization
map \(T:\Theta\to H\). Its realization class is
\(\mathcal M:=T(\Theta)\). The corresponding parametrized problem is
\begin{equation}\label{eq:parametrized_neural_problem}
\inf_{\theta\in\Theta}J(T(\theta))
=\inf_{u\in\mathcal M}J(u).
\end{equation}
When \(J\) is coercive, meaning \(J(u)\to+\infty\) as
\(\|u\|_H\to\infty\), and weakly lower semicontinuous, the full problem
has a minimizer by the direct method. For the results concerning a unique
full-space minimizer, we use the following assumption:
\begin{asu}\label{asu1}
On a real Hilbert space \(H\), the functional \(J:H\to\mathbb R\) is
continuous, weakly lower semicontinuous, strongly convex, and coercive:
\(J(u)\to+\infty\) as \(\|u\|_H\to+\infty\).
\end{asu}

Under this assumption, the direct method and strong convexity yield a unique
minimizer \(u^\ast\) of \(J\) on \(H\). The neural infimum can fail
to be attained when convergent representable states approach a missing
limit.

\paragraph{Deep neural tanh-networks.}
Let \(\Omega\subset\mathbb R^d\) be bounded, nonempty, and open, and
let \(m\ge1\) be an integer. Fix an architecture \(\mathbf p=(p_0,\ldots,p_L)\), with
\(L\ge1\), \(p_0=d\), and \(p_\ell\ge1\). Its hidden states are
\begin{equation}\label{eq:deep_tanh_model_main}
h^0_{\mathbf z}(x)=x,\qquad
h^\ell_{\mathbf z}(x)=\tanh\!\left(
W^\ell h^{\ell-1}_{\mathbf z}(x)+b^\ell\right),
\quad 1\le\ell\le L,
\end{equation}
where \(W^\ell\in\mathbb R^{p_\ell\times p_{\ell-1}}\),
\(b^\ell\in\mathbb R^{p_\ell}\), \(\tanh\) acts componentwise, and
\(\mathbf z=((W^\ell,b^\ell))_{\ell=1}^L\in\mathbb R^q\) collects
all hidden parameters. With total parameters \(\theta=(c,\mathbf z)\),
the network output is
\[
u_\theta=c_0+\sum_{j=1}^{p_L}c_jh^L_{\mathbf z,j}\big|_\Omega.
\]
If an output bias is omitted, \(c_0=0\). All parameters are real
and unrestricted; there are no input skip connections.

For each neuron let \(a_i^\ell=W^\ell_{i,:}\) be its incoming weight
row and \(z_i^\ell=(a_i^\ell,b_i^\ell)\) its augmented parameter vector.
Define
\begin{equation}\label{eq:deep_tanh_gauge_main}
\Delta_{\mathrm{deep}}(\mathbf z):=
\min\left\{
\min_{\ell,i}\|a_i^\ell\|_2,
\min_{\ell,\ i<j}\|z_i^\ell-z_j^\ell\|_2,
\min_{\ell,\ i<j}\|z_i^\ell+z_j^\ell\|_2
\right\},
\end{equation}
with each minimum over absent pairs equal to \(+\infty\). Thus the degeneracy measure
detects a vanishing incoming row or two neurons in the same layer whose
incoming parameters agree up to sign. Biases enter the pair comparison,
but not the row norm: a zero row makes a neuron constant even with a
nonzero bias.

\paragraph{Translated-feature models.}
Let \(m\ge1\) and \(P\ge1\) be integers, and fix a nonzero,
real-valued kernel \(K\in H^m(\mathbb R)\). Write
\begin{equation}\label{rbfnn}
u_\theta(x)=\sum_{k=1}^P w_kK(x-b_k),\qquad x\in\mathbb R,
\end{equation}
where \(\theta=(w,b)=((w_k,b_k))_{k=1}^P\), and set
\[
\mathcal M_P:=\left\{\sum_{k=1}^P w_kK(\cdot-b_k):
w_k\in\mathbb R,\ b_k\in\mathbb R\right\}.
\]
Zero weights allow fewer than \(P\) features. All weights and centres are
real and unrestricted. Here \(H=H^m(\mathbb R)\),
\(\Theta=\mathbb R^{2P}\), and \(T(\theta)=u_\theta\).
We write \(\mathcal C_P:=\overline{\mathcal M_P}^{\,H^m(\mathbb R)}\)
for the completed class. The completion results below assume the additional
regularity \(K\in H^{m+P-1}(\mathbb R)\).

For the Fourier bounds in Appendix~\ref{subsec:translated_weight_control_appendix},
we use the following assumption in dimension \(d\ge1\). It is not required
for the one-dimensional completion theorem.
\begin{asu}\label{asu:kernel}
Let \(K\in W^{m,1}(\mathbb R^d)\cap H^{m+1}(\mathbb R^d)\) be real-valued
and let its Fourier transform \(\widehat K\) be nowhere zero.
\end{asu}


\section{Main results: the missing boundary and its completion}
\label{sec:main_results}

Missing limits force singular parameters in deep neural tanh-networks. For
the translated-feature class defined above, derivative completion captures every
weak limit and restores variational attainment. In the original class,
nonattainment persists under target perturbations, with a sharp
coefficient-growth law.

\subsection{Missing limits in deep neural tanh-networks force singularity}
\label{subsec:deep_tanh_geometry}
\label{subsec:shallow_tanh_geometry}

For the architecture~\eqref{eq:deep_tanh_model_main}, the degeneracy
measure~\eqref{eq:deep_tanh_gauge_main} gives the following alternative.
\begin{theorem}[Hidden-parameter escape or finite hidden-layer degeneracy]
\label{thm:tanh_forced_collision}
Let \(\Omega\subset\mathbb R^d\) be bounded, nonempty, and open, and
let \(m\ge1\) be an integer. Let \(J:H^m(\Omega)\to\mathbb R\) satisfy
Assumption~\ref{asu1}, and suppose that its minimizer \(u^\ast\) is not representable by
the fixed neural architecture \eqref{eq:deep_tanh_model_main}.
Then any sequence of networks \(u_{\theta_n}\) of this architecture
with parameters \(\theta_n=(c_n,\mathbf z_n)\) satisfying
\[
J(u_{\theta_n})\longrightarrow J(u^\ast)=\inf_{v\in H^m(\Omega)}J(v)
\]
has the following properties:
\begin{enumerate}[label=\textup{(\roman*)},leftmargin=*]
\item The total parameter norm satisfies
\[
\|\theta_n\|_2\longrightarrow\infty.
\]
\item Along every subsequence satisfying \(\sup_k\|\mathbf z_{n_k}\|_2<\infty\),
\begin{equation}\label{eq:tanh_forced_collision_main}
\|c_{n_k}\|_2\to\infty,
\qquad
\Delta_{\mathrm{deep}}(\mathbf z_{n_k})\to0.
\end{equation}
\item Globally,
\begin{equation}\label{eq:deep_tanh_global_alternative_main}
\min\left\{
\frac{1}{1+\|\mathbf z_n\|_2},
\Delta_{\mathrm{deep}}(\mathbf z_n)
\right\}\longrightarrow0.
\end{equation}
\end{enumerate}
\end{theorem}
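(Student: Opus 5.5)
Under Assumption~\ref{asu1}, I first reduce everything to strong convergence of states. Since \(J\) is strongly convex with modulus \(\mu>0\) and \(u^\ast\) is its minimizer, we have \(J(u)-J(u^\ast)\ge\frac{\mu}{2}\|u-u^\ast\|_{H^m(\Omega)}^2\). Hence \(J(u_{\theta_n})\to J(u^\ast)\) gives \(u_{\theta_n}\to u^\ast\) strongly in \(H^m(\Omega)\). The core of the argument is a \emph{closedness lemma}. Suppose a sequence satisfies \(\sup_k\|\mathbf z_{n_k}\|_2<\infty\) and \(\inf_k\Delta_{\mathrm{deep}}(\mathbf z_{n_k})>0\), and its realized states converge in \(H^m(\Omega)\). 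Then the limit is realizable by the same architecture. Parts (i)–(iii) follow from this lemma by contradiction, as described next.

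For (ii), take a subsequence with bounded hidden parameters. Extract a further subsequence with \(\mathbf z_{n_k}\to\mathbf z\). The realization map \((c,\mathbf z)\mapsto u_{(c,\mathbf z)}\) is continuous into \(H^m(\Omega)\) because \(\Omega\) is bounded and \(\tanh\) is smooth.
\begin{itemize}[leftmargin=2.5em,labelsep=0.6em]
\item \emph{Bounded outer coefficients.} If \(c_{n_k}\) were bounded along a further subsequence, then \(u^\ast=u_{(c,\mathbf z)}\) would be representable, which is a contradiction. So \(\|c_{n_k}\|_2\to\infty\) along every such subsequence.
\item \emph{Nondegenerate limit.} Suppose \(\Delta_{\mathrm{deep}}(\mathbf z)>0\) at the limit. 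Then the last-layer features \(1,h^L_{\mathbf z,1},\dots,h^L_{\mathbf z,p_L}\) (or the version without the constant) are linearly independent on \(\Omega\). I would prove this by induction on depth. If no incoming row vanishes and no two neurons in a layer coincide up to sign, then the neurons of the next layer are nonconstant real-analytic functions. The key one-layer fact is the classical Sussmann/Fefferman-type result. If \(g_1,\dots,g_p\) are real-analytic on \(\Omega\), and the affine combinations \(a_i\cdot g+b_i\) are pairwise non-equal and non-opposite and nonconstant, then \(\tanh(a_i\cdot g+b_i)\) and \(1\) are linearly independent. To prove it, continue analytically to complex arguments and compare pole sets, using that \(\tanh\) is odd with poles at \(i\pi(\tfrac12+\mathbb Z)\). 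The previous layer's functions must be suitably independent, including \(1\), so that distinct augmented vectors give distinct affine functions. That is supplied by the inductive hypothesis.
\item \emph{Conclusion of (ii).} Linear independence of finitely many fixed functions gives a uniform lower frame bound \(\|\sum_j c_jh_j\|_{H^m}\ge\beta\|c\|\). By continuity this bound persists for \(\mathbf z_{n_k}\) near \(\mathbf z\). Since \(u_{\theta_{n_k}}\) is bounded, \(c_{n_k}\) is bounded, which is the contradiction. Hence every convergent sub-subsequence has \(\Delta_{\mathrm{deep}}(\mathbf z)=0\). Continuity of \(\Delta_{\mathrm{deep}}\) then yields \(\Delta_{\mathrm{deep}}(\mathbf z_{n_k})\to0\).
\end{itemize}

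For (i), argue by contradiction. If \(\|\theta_n\|_2\) stayed bounded along a subsequence, compactness and continuity of the realization map would make \(u^\ast\) representable. For (iii), suppose the minimum in \eqref{eq:deep_tanh_global_alternative_main} stays above some \(\delta>0\) along a subsequence. Then \(\|\mathbf z_n\|_2\) is bounded along it and \(\Delta_{\mathrm{deep}}\ge\delta\), which contradicts (ii).

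The main obstacle is the linear-independence step in deep layers. The inputs to layer \(\ell\) are not coordinates but analytic functions \(h^{\ell-1}\). Distinct augmented parameters \((a,b)\ne\pm(a',b')\) must yield affine combinations \(a\cdot h^{\ell-1}+b\) that are not equal or opposite as functions on \(\Omega\). This is exactly the independence of \(\{1,h^{\ell-1}_1,\dots\}\) from the induction. I would first try to strengthen the inductive statement accordingly. I would then handle the \(\tanh\) independence via the complex-pole argument of the shallow example in Section~\ref{sec:motiv_example}: pick a point where one affine argument hits a pole that the others avoid, which is generic by analyticity. I would fall back on citing Sussmann's uniqueness theorem for \(\tanh\) networks if the pole bookkeeping becomes heavy.
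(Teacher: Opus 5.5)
\textbf{The gap is in the one step everything rests on.} Your outer scaffold matches the paper's proof. Strong convexity gives strong convergence. Injectivity of \(c\mapsto T_{\mathbf z}c\) together with continuity gives a frame bound; the paper takes it uniformly on the compact set \(\{\|\mathbf z\|_2\le R,\ \Delta_{\mathrm{deep}}(\mathbf z)\ge s\}\), you take it locally at a limit point, and these are equivalent. Parts (i)--(iii) then follow by the same contradictions.

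The step that fails is the claim that \(\Delta_{\mathrm{deep}}(\mathbf z)>0\) makes \(1,h^L_{\mathbf z,1},\dots,h^L_{\mathbf z,p_L}\) linearly independent on \(\Omega\) when \(L\ge2\). Your one-layer fact is false as stated, even with the strengthened hypothesis that \(\{1,g_1,\dots,g_p\}\) is independent. On \(\Omega=(-\tfrac1{10},\tfrac1{10})\) take
\[
g_1(x)=x,\qquad g_2(x)=x^2,\qquad g_3(x)=-\operatorname{artanh}\bigl(\tanh x+\tanh x^2\bigr).
\]
Here \(g_3\) is real analytic because \(|\tanh x+\tanh x^2|<1\) on \(\Omega\), and \(g_3(x)=-x-x^2-x^4+O(x^5)\). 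Hence \(1,g_1,g_2,g_3\) are linearly independent. With \(a_i=e_i\) and \(b_i=0\), the three affine combinations are nonconstant and pairwise neither equal nor opposite. Yet \(\tanh g_1+\tanh g_2+\tanh g_3\equiv0\).

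So an induction that carries only analyticity and independence of the previous layer cannot close. The pole heuristic fails in the same example: every pole of \(\tanh g_3=-\tanh g_1-\tanh g_2\) comes from the other two terms, so there is no pole that the others avoid. For \(\ell\ge2\) the situation is worse, because the preactivations \(a\cdot h^{\ell-1}+b\) are themselves meromorphic with poles, so \(\tanh\) of them has essential singularities there. Your fallback does not help either: Sussmann's uniqueness theorem is a single-hidden-layer result and settles only \(L=1\).

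\textbf{What is needed instead.} You need a genuine deep identifiability result. The paper uses the null-net theorem of Vla\v{c}i\'c and B\"olcskei: a nontrivial \emph{regular} deep tanh network cannot have identically zero output, even with an output constant. It then reduces to that setting by pruning, as follows.
\begin{itemize}
\item Assume \(T_{\mathbf z}c=0\) with some \(c_j\ne0\), \(j\ge1\).
\item Keep the active last-layer neurons and their ancestors along nonzero-weight edges, and delete everything else.
\item Because incoming rows are nonzero, every retained neuron keeps its original layer as its graph level.
\item Two retained neurons with the same parent set and incoming parameters equal up to sign would have original augmented rows equal up to sign. This contradicts \(\Delta_{\mathrm{deep}}(\mathbf z)>0\).
\end{itemize}
The pruned network is therefore regular and nontrivial. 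Its output vanishes on \(\Omega\), hence on \(\mathbb R^d\) by analyticity, which the null-net theorem forbids. If all \(c_j=0\) for \(j\ge1\), then \(T_{\mathbf z}c=c_0=0\) directly.

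Even if you cite a deep result, this regularity check is an essential part of the argument. With this in place of your induction, the rest of your proposal goes through.
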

The energy hypothesis requires a zero approximation gap:
\(\inf_{v\in\mathcal M}J(v)=J(u^\ast)\), equivalently
\(u^\ast\in\overline{\mathcal M}^{\,H^m(\Omega)}\). Minimizing the restricted
objective alone does not imply this equality.
\begin{remark}[Arbitrary convergent sequences]
\label{rem:deep_arbitrary_sequences}
More generally, the same conclusions hold for every sequence of networks
\(u_{\theta_n}\to u\) strongly in \(H^m(\Omega)\) whose limit is not
representable by the fixed architecture \eqref{eq:deep_tanh_model_main}.
This geometric statement is independent of the objective \(J\).
\end{remark}
When hidden parameters remain bounded, output weights diverge while, in
at least one hidden layer, two neurons approach a signed collision or one
neuron becomes constant
(Figure~\ref{fig:forced_singularity}). This condition is necessary, not
sufficient: inactive neurons can make \(\Delta_{\mathrm{deep}}=0\) even when
the represented state belongs to the original class. The result does not
characterize the full closure of deep neural tanh-networks.
Appendix~\ref{app:deep_tanh_proof} proves it using compactness and
identifiability of deep neural tanh-networks \cite{VlacicBolcskei2021}.
\begin{paperfigure}[!t]
\centering
\includegraphics[width=\linewidth]{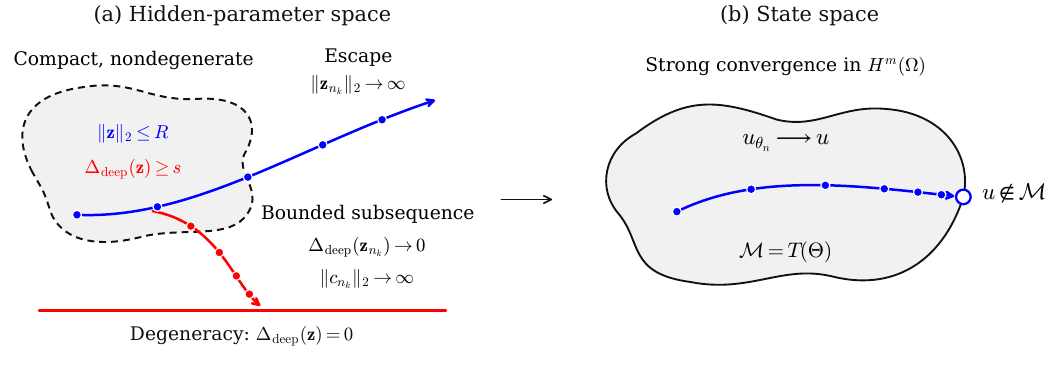}
\caption{Network outputs can converge while parameters escape or neurons
degenerate. The limiting state lies outside the model. Schematic.}
\label{fig:forced_singularity}
\end{paperfigure}
\subsection{Derivative completion captures all weak limits}
\label{sec:feature_collision}
\label{subsec:mechanisms}

For the classes \(\mathcal M_P\) and \(\mathcal C_P\) defined in
Section~\ref{subsec:preliminaries}, derivative blocks account for every
missing limit.

\Needspace{10\baselineskip}
\begin{theorem}[Exact completion and weak sequential closedness]
\label{thm:translated_feature_geometry}
Let \(m\ge1\) and \(P\ge1\) be integers, and let
\(0\neq K\in H^{m+P-1}(\mathbb R)\) be real-valued. The strong closure
\(\mathcal C_P=\overline{\mathcal M_P}^{\,H^m(\mathbb R)}\) consists exactly
of the sums
\[
u=\sum_{q=1}^{Q}\sum_{r=0}^{\nu_q-1}w_{q,r}K^{(r)}(\cdot-b_q),
\qquad \sum_{q=1}^{Q}\nu_q\le P.
\]
Here \(Q\) is an integer with \(0\le Q\le P\), the centres \(b_q\) are distinct real numbers,
\(w_{q,r}\in\mathbb R\), and \(\nu_q\in\mathbb N\).
The empty sum \(Q=0\) denotes the zero function.

In fact, every weakly convergent sequence in \(\mathcal M_P\) has a limit of this
form, and every such sum is a strong \(H^m(\mathbb R)\) limit of sequences
in \(\mathcal M_P\). Moreover, \(\mathcal C_P\) is weakly sequentially
closed in \(H^m(\mathbb R)\).
\end{theorem}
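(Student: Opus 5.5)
Let \(\mathcal D_P\) denote the set of derivative-block sums described in Theorem~\ref{thm:translated_feature_geometry}. The proof shows three things: every element of \(\mathcal D_P\) is a strong limit of elements of \(\mathcal M_P\); every weak limit of a sequence in \(\mathcal M_P\) lies in \(\mathcal D_P\); and \(\mathcal D_P\) is weakly sequentially closed. Together these give \(\mathcal M_P\subseteq\mathcal D_P\subseteq\mathcal C_P\subseteq\mathcal D_P\), and hence all the claims of the theorem.

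\textbf{Approximation.} Fix one block \(\sum_{r<\nu}w_rK^{(r)}(\cdot-b)\). Each \(K^{(r)}(\cdot-b)\), for \(r\le\nu-1\le P-1\), is the limit of a divided difference of translates at \(\nu\) distinct points \(b+jh\), \(j=0,\dots,\nu-1\). Concretely, \(h^{-r}\Delta_{-h}^rK(\cdot-b)\to(-1)^rK^{(r)}(\cdot-b)\) in \(H^m\). This follows from Plancherel: the Fourier multiplier \(((e^{ih\xi}-1)/h)^r\) converges to \((i\xi)^r\), and it is dominated by \(|\xi|^r\), which is integrable against \((1+|\xi|^2)^m|\widehat K|^2\) because \(K\in H^{m+r}\). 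A linear combination of these differences uses only the \(\nu\) points \(b+jh\). Therefore the whole block lies in \(\mathcal M_\nu\) up to an \(O(h)\) error. Using disjoint point sets for distinct centres and \(\sum\nu_q\le P\), we conclude \(\mathcal D_P\subseteq\overline{\mathcal M_P}\).

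\textbf{Weak limits (main step).} Let \(u_n=\sum_kw_{k,n}K(\cdot-b_{k,n})\rightharpoonup u\). The argument is an induction on \(P\) combined with clustering.
\begin{enumerate}
\item Pass to a subsequence and partition the indices into clusters. Within a cluster the centres stay at bounded mutual distance; distinct clusters separate to infinity. Each cluster has a reference centre \(\beta_{q,n}\), which either converges or diverges.
\item Clusters whose centres diverge contribute nothing to the weak limit. To see this, test against a fixed \(\varphi\) and use decay of \(\int K^{(j)}(x-\beta)\varphi\) as \(|\beta|\to\infty\). The coefficients may be unbounded, so this only works after a rewriting.
\item Rewrite each cluster with \(s\) members in Newton divided-difference form relative to its own nodes: \(\sum_k w_kK(\cdot-b_k)=\sum_{j<s}\alpha_j\,K[b_1,\dots,b_{j+1}]\), where \(K[\cdots]\) denotes divided differences of the map \(t\mapsto K(\cdot-t)\). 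By Hermite–Genocchi, each \(K[b_1,\dots,b_{j+1}]\) is an average of \((-1)^jK^{(j)}(\cdot-t)/j!\) over \(t\) in the convex hull of the nodes. This family is bounded in \(H^m\) because \(K\in H^{m+P-1}\), and it converges strongly once the nodes converge. It also converges weakly to \(0\) under escape, by the translation estimate.
\item The remaining difficulty is the size of the \(\alpha_j\). First normalize by \(M_n=\max_{q,j}|\alpha_{q,j,n}|\). If \(M_n\to\infty\), the normalized sum tends weakly to \(0\), while the normalized coefficients converge to a nonzero limit. In the limit, a nontrivial combination of distinct derivative atoms \(K^{(j)}(\cdot-b_q)\) would then vanish. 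This is where I expect the real obstacle. Linear independence of \(\{K^{(j)}(\cdot-b_q)\}\) holds by Fourier analysis: it gives \(\widehat K(\xi)\sum_q p_q(i\xi)e^{-ib_q\xi}=0\), and an exponential polynomial has isolated zeros, so \(\widehat K\) would vanish a.e., contradicting \(K\neq0\). However, escaping clusters and clusters whose nodes merge only partially complicate this step.
\item To handle those cases, I plan to apply the induction on \(P\). Within a bounded cluster, discard top-order divided differences whose nodes stay separated and re-express the rest. Alternatively, use the first \(j\) at which \(\alpha_j\) blows up and reduce the cluster size. Once coefficients are bounded, extract convergent subsequences of the \(\alpha\), the centres, and the divided differences. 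The limit then has the stated form, with \(\nu_q\) at most the cluster size, so \(\sum\nu_q\le P\).
\end{enumerate}

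\textbf{Weak closedness of \(\mathcal C_P\).} Apply the same clustering argument, now to sequences in \(\mathcal D_P\) rather than in \(\mathcal M_P\). Each derivative atom \(K^{(r)}(\cdot-b)\) with \(r<\nu\) is itself a confluent divided difference, so the Hermite–Genocchi rewriting and the normalization argument carry over verbatim with repeated nodes. This yields \(\mathcal D_P\) weakly sequentially closed; since the first two steps show \(\mathcal C_P=\mathcal D_P\), the final claim follows.
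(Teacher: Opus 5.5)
Your architecture matches the paper's: clusters whose reference centres separate, Newton coordinates via Hermite--Genocchi, Fourier linear independence of derivative atoms, and compactness of the coefficients. Your approximation step is also fine, apart from one overstatement: with $K$ only in $H^{m+P-1}$, dominated convergence gives convergence but not in general an $O(h)$ rate.

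The gap is in steps~4--5, the uniform bound on the Newton coefficients $\alpha_{q,j,n}$. This bound is the heart of the theorem, and you leave it as a plan. Your normalization argument takes a weak limit in a fixed frame. That fails when the maximal normalized coefficient lies on a cluster whose reference centre $\beta_{q,n}$ escapes. That cluster tends weakly to zero, and the weak limit only sees the (at most one) non-escaping cluster, whose normalized coefficients may all tend to zero. So you get no contradiction. Since all clusters but at most one escape, this is the generic case, not a corner case. The proposed induction on $P$ (discarding separated top-order differences, or truncating at the first blowing-up $\alpha_j$) is not specified well enough to close this, and it is not needed.

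The missing idea is to use norms rather than weak limits. Because $(u_n)$ is bounded, $u_n/M_n\to0$ \emph{strongly}, not just weakly. Pass to a subsequence on which the maximum is attained at a fixed index $(q^*,j^*)$. Then translate everything by $-\beta_{q^*,n}$, which preserves the $H^m$ norm. In this frame, cluster $q^*$ converges strongly to $\sum_j\bar\alpha_j e_{q^*,j}$, where $e_{q^*,j}$ are its limiting Newton functions and $|\bar\alpha_{j^*}|=1$. Every other cluster has coefficients bounded by one and separates from $\beta_{q^*,n}$, so it tends weakly to zero. You therefore obtain a nontrivial vanishing combination of the limiting Newton functions of cluster $q^*$, which is a contradiction.

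The paper packages the same fact as a lower frame bound $\|\sum_{B,r}q_{B,r}E_{B,r,n}\|_{H^m}\ge\eta\|q\|_2$ for large $n$. Cross-cluster Gram entries tend to zero by correlation decay, and each cluster's limiting Gram matrix is positive definite.

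Your worry about partial merging is unfounded. The Newton functions of a bounded cluster converge to confluent divided differences at the limiting relative nodes. These are a triangular, invertible transform of the Hermite atoms $K^{(i)}(\cdot-\lambda)$ at the distinct limits, so they are independent by your Fourier argument, and no re-expression is needed.

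Finally, weak closedness of $\mathcal C_P$ does not require redoing the argument with repeated nodes; as you wrote it, that route would inherit the same gap. If $v_n\in\mathcal C_P$ and $v_n\rightharpoonup v$, choose $u_n\in\mathcal M_P$ with $\|u_n-v_n\|_{H^m}\le 1/n$. Then $u_n\rightharpoonup v$, so $v$ is a weak limit of a sequence in $\mathcal M_P$.
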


The proof is in
Appendix~\ref{subsec:translated_feature_geometry_proof}.

The weak sequential closedness of \(\mathcal C_P\) ensures that weak limits
of minimizing sequences remain in the completed class, so coercivity and
weak lower semicontinuity yield attainment by the direct method.

\Needspace{8\baselineskip}
\begin{corollary}[Variational attainment on the completed class]
\label{cor:completed_variational_attainment}
\label{rem:completed_variational_attainment}
Under the kernel hypotheses of Theorem~\ref{thm:translated_feature_geometry},
set \(H=H^m(\mathbb R)\) and
\(\mathcal C_P:=\overline{\mathcal M_P}^{\,H}\).
Every coercive, weakly lower semicontinuous \(J:H\to\mathbb R\)
attains its minimum on \(\mathcal C_P\). If \(J\) is also continuous in the
strong topology, then
\[
\inf_{u\in\mathcal M_P}J(u)=\min_{u\in\mathcal C_P}J(u).
\]
\end{corollary}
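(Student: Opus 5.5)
The plan is to apply the direct method on \(\mathcal C_P\), using the weak sequential closedness from Theorem~\ref{thm:translated_feature_geometry}, and then compare the infima over \(\mathcal M_P\) and \(\mathcal C_P\) using strong continuity and density.

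\textbf{Step 1: a minimizer exists on \(\mathcal C_P\).}
\begin{itemize}
\item Set \(\alpha:=\inf_{u\in\mathcal C_P}J(u)\). Since \(0\in\mathcal M_P\subseteq\mathcal C_P\) (take all weights zero), \(\mathcal C_P\) is nonempty and \(\alpha<+\infty\).
\item Choose a minimizing sequence \(u_n\in\mathcal C_P\) with \(J(u_n)\to\alpha\).
\item Coercivity forces \((u_n)\) to be bounded in \(H\). Otherwise some subsequence has \(\|u_{n_k}\|_H\to\infty\), hence \(J(u_{n_k})\to+\infty\), contradicting \(J(u_n)\to\alpha<+\infty\).
\item Since \(H=H^m(\mathbb R)\) is a Hilbert space, bounded sets are weakly sequentially relatively compact. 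So we may extract \(u_{n_k}\rightharpoonup u\) weakly in \(H\).
\item By Theorem~\ref{thm:translated_feature_geometry}, \(\mathcal C_P\) is weakly sequentially closed, so \(u\in\mathcal C_P\).
\item Weak lower semicontinuity gives \(J(u)\le\liminf_k J(u_{n_k})=\alpha\). Hence \(J(u)=\alpha>-\infty\), and \(u\) is a minimizer on \(\mathcal C_P\).
\end{itemize}

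\textbf{Step 2: the infimum over \(\mathcal M_P\) equals this minimum.}
\begin{itemize}
\item Since \(\mathcal M_P\subseteq\mathcal C_P\), we have \(\inf_{\mathcal M_P}J\ge\min_{\mathcal C_P}J=J(u)\).
\item For the reverse inequality, take the minimizer \(u\). By definition of the strong closure, there are \(v_n\in\mathcal M_P\) with \(v_n\to u\) strongly in \(H\).
\item Strong continuity of \(J\) gives \(J(v_n)\to J(u)\). Therefore \(\inf_{\mathcal M_P}J\le J(u)\), and the two quantities are equal.
\end{itemize}

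\textbf{Where the difficulty lies.} There is no real obstacle at this level. All the substantive work is in the weak sequential closedness of \(\mathcal C_P\), which we take from Theorem~\ref{thm:translated_feature_geometry}. The closedness must be \emph{weak} and not merely strong: a strongly closed set need not contain weak limits of bounded sequences unless it is convex, and \(\mathcal C_P\) is not convex.

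The only delicate points are:
\begin{itemize}
\item \(\alpha\) must be finite for the coercivity argument; this is secured by \(0\in\mathcal C_P\).
\item The minimum must be finite; this follows from \(J\) being real-valued.
\end{itemize}
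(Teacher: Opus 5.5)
Your proposal is correct and follows the same route as the paper: you pick a minimizing sequence in \(\mathcal C_P\) (nonempty since \(0\in\mathcal C_P\)), bound it by coercivity, extract a weak limit, keep it in \(\mathcal C_P\) via the weak sequential closedness from Theorem~\ref{thm:translated_feature_geometry}, and conclude with weak lower semicontinuity. You then obtain equality of the infima from density of \(\mathcal M_P\) in \(\mathcal C_P\) and strong continuity of \(J\), exactly as the paper does.
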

If \(J\) is strongly continuous, the original infimum is attained in the
completed class, even when the best approximation error is positive.
This does not imply uniqueness or
convergence of an optimizer. Appendix~\ref{subsec:attainment_corollaries_proof}
gives the proof.

\begin{remark}[Sequences approaching the full-space minimum]
\label{rem:translated_arbitrary_sequences}
Under the kernel hypotheses of Theorem~\ref{thm:translated_feature_geometry}
and Assumption~\ref{asu1}, let \(u^\ast\) be the full-space minimizer of \(J\).
Every sequence \(u_n\in\mathcal M_P\) with \(J(u_n)\to J(u^\ast)\) converges
strongly to \(u^\ast\), which then has the stated derivative representation.
Such a sequence exists exactly when \(u^\ast\in\mathcal C_P\), equivalently
when \(\inf_{v\in\mathcal M_P}J(v)=J(u^\ast)\). This zero-gap condition is
unnecessary for the constrained attainment in
Corollary~\ref{cor:completed_variational_attainment}.
The proof is in Appendix~\ref{subsec:translated_feature_geometry_proof}.
\end{remark}

Figure~\ref{fig:completion_patterns} illustrates the collision mechanism
with three Gaussian translates. The coefficients are fitted by least squares
at prescribed centres.
\begin{paperfigure}[!t]
\centering
\includegraphics[width=\linewidth]{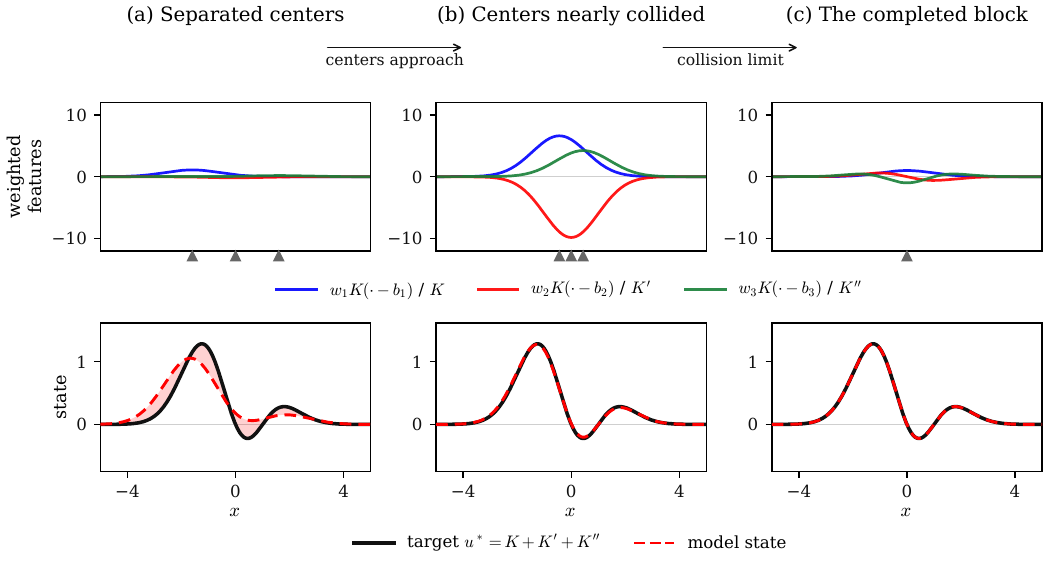}
\caption{As three Gaussian centres merge, weights grow and the fit improves.
A derivative block represents the target exactly. Top: individual
contributions; bottom: their sum and the target.}
\label{fig:completion_patterns}
\end{paperfigure}
\begin{remark}[The whole-line norm is essential]
On \(\Omega=(-1,1)\), the Cauchy kernel \(K(x)=(1+x^2)^{-1}\) satisfies
\[
b^2K(x-b)=\left(1-\frac{2x}{b}+\frac{x^2+1}{b^2}\right)^{-1}
\longrightarrow1\quad\text{in }H^m(\Omega)\quad(b\to+\infty).
\]
Convergence holds in \(C^m([-1,1])\). No finite sum of finite-centre
translates and their derivatives equals \(1\) on \(\Omega\): analyticity
would extend the equality to \(\mathbb R\), where every such sum decays.
Thus derivative completion alone need not restore bounded-domain
attainment.
\end{remark}

\subsection{Robust nonattainment and sharp coefficient growth}
\label{subsec:robust_nonattainment}

Fix integers \(m\ge1\), \(P\ge2\) and a real-valued kernel
\(0\ne K\in H^{m+\max\{P-1,5\}}(\mathbb R)\), and set
\(H=H^m(\mathbb R)\). Consider the simple-collision state
\[
u_0=a_0K(\cdot-c_0)+b_0K'(\cdot-c_0)
 +\sum_{j=1}^{P-2}d_{j,0}K(\cdot-c_{j,0}),
\]
with all centres distinct and \(b_0,d_{j,0}\ne0\).

The next theorem shows that nonattainment persists under target perturbations
and quantifies the coefficient growth needed to approach the best loss.

\Needspace{8\baselineskip}
\begin{theorem}[Robust nonattainment and sharp coefficient growth]
\label{thm:robust_nonattainment}
Every neighbourhood of \(u_0\) contains a nonempty open set \(U\subset H\)
such that, for each \(g\in U\), the functional associated with an elliptic-type energy,
\[
J(v):=\frac12\|v-g\|_H^2,
\]
has a unique minimizer
\(u_\ast\notin\mathcal M_P\) on \(\mathcal C_P\), with
\(\inf_{v\in\mathcal M_P}J(v)=J(u_\ast)\). Moreover,
\begin{equation}\label{eq:robust_coefficient_budget}
\lim_{R\to\infty}R^2
\inf_{\substack{u=u_\theta,\ \theta=(w,z)\in\mathbb R^{2P}\\ \|w\|_2\le R}}
\bigl[J(u)-J(u_\ast)\bigr]=C\in(0,\infty),
\end{equation}
where \(u_\theta\) is given by~\eqref{rbfnn}, with unrestricted centres.
\end{theorem}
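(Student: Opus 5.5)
The plan is to reduce everything to a local analysis of \(\mathcal C_P\) near the simple-collision stratum. Let
\[
\mathcal S=\Bigl\{aK(\cdot-c)+bK'(\cdot-c)+\textstyle\sum_{j=1}^{P-2}d_jK(\cdot-c_j)\Bigr\}
\]
with \(b,d_j\neq0\) and distinct centres, so that \(u_0\in\mathcal S\). Near \(u_0\), I will parametrize states by \((a,b,c,\tau,d,c_j)\), where \(\tau\) is a signed squared half-separation. Two translates \(w_\pm K(\cdot-c\mp h)\) with \(w_\pm=\tfrac a2\pm\tfrac{b}{2h}\cdot(-1)\) (sign chosen so the \(K'\) coefficient is \(b\)) give a sum that is an even function of \(h\). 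Expanding in \(\tau=h^2\),
\[
aK(\cdot-c)+bK'(\cdot-c)+\tau\Bigl(\tfrac a2K''+\tfrac b6K'''\Bigr)(\cdot-c)+O(\tau^2),
\]
and this extends smoothly to \(\tau<0\) (complex-conjugate centres, real combination). The \(O(\tau^2)\) remainder in \(H^m\) uses \(K\in H^{m+5}\).

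This defines a \(C^1\) map \(\Phi\) from a neighbourhood of \((a_0,b_0,c_0,0,d_{j,0},c_{j,0})\) in \(\mathbb R^{2P}\) into \(H\). I will check that \(D\Phi\) is injective at \(u_0\), i.e.\ that
\[
K,\;K',\;K'',\;\tfrac{a_0}2K''+\tfrac{b_0}6K''',\;K(\cdot-c_{j,0}),\;K'(\cdot-c_{j,0})
\]
(in the appropriate combinations) are linearly independent. I plan to derive this from Theorem~\ref{thm:translated_feature_geometry} applied with \(P\) replaced by \(P+1\), together with \(b_0,d_{j,0}\neq0\), and, if needed, by perturbing \(u_0\) inside the given neighbourhood. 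With injectivity, \(\mathcal N=\Phi(\cdot)\) is a local \(2P\)-dimensional embedded submanifold. Theorem~\ref{thm:translated_feature_geometry} then identifies \(\mathcal C_P\cap B_\rho(u_0)\) with \(\Phi(\{\tau\ge0\})\): the other strata (higher-order blocks, fewer features) are excluded near \(u_0\) because \(b_0,d_{j,0}\neq0\) and the centres are distinct. Moreover, \(\mathcal M_P\cap B_\rho(u_0)=\Phi(\{\tau>0\})\) and \(\mathcal S\cap B_\rho=\Phi(\{\tau=0\})\). For \(\mathcal S\cap\mathcal M_P=\emptyset\) I will use the uniqueness part of the representation, namely linear independence of \(P+1\) atoms.

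Next I construct \(U\). Let \(\nu(s)=\partial_\tau\Phi\) at \(s\in\mathcal S\); this is the inward direction of the half-manifold. I take \(g=s+n\), where \(s\in\mathcal S\) is near \(u_0\) and \(n\) is small with \(n\perp T_s\mathcal S\) and \(\langle n,\nu(s)\rangle<0\). By the inverse function theorem applied to \((s,n)\mapsto s+n\), the set of such \(g\) contains a nonempty open set in any neighbourhood of \(u_0\). The point is to take \(n\) with a strictly negative component along the projection of \(\nu\) onto the normal space of \(\mathcal S\) inside \(T\mathcal N\), plus an arbitrary small component normal to \(\mathcal N\).

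For such \(g\), the minimization of \(\tfrac12\|v-g\|^2\) over \(\mathcal C_P\) proceeds in two steps. Existence is given by Corollary~\ref{cor:completed_variational_attainment}. Since \(\operatorname{dist}(g,\mathcal C_P)\le\|n\|\), any minimizer lies in \(B_{2\|n\|}(u_0)\subset B_\rho\), by smallness. On the chart \(\{\tau\ge0\}\), the function \(\tfrac12\|\Phi-g\|^2\) has its Hessian close to the Gram matrix of \(D\Phi\), hence is strictly convex on a small ball. Its unconstrained critical point lies at \(\tau<0\) because \(\langle n,\nu\rangle<0\). Hence the constrained minimizer is unique and lies on \(\tau=0\), i.e.\ \(u_\ast\in\mathcal S\) and \(u_\ast\notin\mathcal M_P\). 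The equality \(\inf_{\mathcal M_P}J=J(u_\ast)\) follows from strong continuity of \(J\) and density of \(\mathcal M_P\) in \(\mathcal C_P\), as in Corollary~\ref{cor:completed_variational_attainment}.

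For the growth law~\eqref{eq:robust_coefficient_budget}, write \(\lambda:=-\langle g-u_\ast,\nu(u_\ast)\rangle>0\). Along \(\mathcal M_P\) near \(u_\ast\),
\[
J(\Phi(\ldots,\tau,\ldots))-J(u_\ast)=\lambda\tau+\tfrac12Q(\delta)+o(\tau+|\delta|^2),
\]
where \(\delta\) denotes the remaining tangential displacement and \(Q\) is positive definite on it. The weights satisfy \(\|w\|_2^2=\tfrac{b^2}{2h^2}+O(1)=\tfrac{b^2}{2\tau}+O(1)\), so \(\|w\|_2\le R\) forces \(\tau\ge b_\ast^2/(2R^2)(1+o(1))\).

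The upper bound comes from taking \(\delta=0\) and \(\tau=b_\ast^2/(2R^2)\), which gives \(R^2[J-J(u_\ast)]\to C:=\lambda b_\ast^2/2\) (up to the exact constant from \(\|w\|_2\)).

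The lower bound is the main obstacle, because the infimum is over all \(\theta\) with \(\|w\|\le R\), including configurations far from \(u_\ast\). I would argue by contradiction along a sequence \(R_n\to\infty\) with near-optimal \(\theta_n\). Then \(J(u_{\theta_n})\to J(u_\ast)\), so by uniqueness and the strong-convergence argument of Remark~\ref{rem:translated_arbitrary_sequences} (the quadratic \(J\) gives \(u_{\theta_n}\to u_\ast\) strongly), eventually \(u_{\theta_n}\in B_\rho(u_\ast)\). The chart description of \(\mathcal M_P\cap B_\rho\) then applies. I also need that the centres of \(\theta_n\) stay in the chart, i.e.\ no alternative parametrization of nearby states with small weights; this is a local identifiability statement, which I would again derive from linear independence of the atoms.

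Inside the chart, the expansion gives \(J-J(u_\ast)\ge\lambda\tau(1-o(1))\ge\lambda b_\ast^2/(2R^2)(1-o(1))\), which proves \(C\in(0,\infty)\).
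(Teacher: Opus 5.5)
\textbf{The local-chart step is a genuine gap.} Your architecture is the paper's: the chart in \(\tau=h^2\), injectivity of the derivative at \((\eta_0,0)\), and uniform convexity of \(F=J\circ\Phi\) on a small box. You push the target off \(\mathcal S\) so that \(\partial_\tau F>0\) at \(\tau=0\); your tubular-neighbourhood construction of \(U\) is an equivalent substitute for the paper's implicit-function argument on \(D_\eta F(\eta,0)=0\). You also use the weight identity \(\|w\|_2^2=S(\eta)+b^2/(2\tau)\) and obtain \(C=\lambda b_\ast^2/2\), where your \(\lambda\) is the paper's \(\ell\).

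The gap is the step you attribute to Theorem~\ref{thm:translated_feature_geometry}. That step claims \(\mathcal C_P\cap B_\rho(u_0)\) and \(\mathcal M_P\cap B_\rho(u_0)\) are exactly the images of \(\{\tau\ge0\}\) and \(\{\tau>0\}\) under \(\Phi\), restricted to the \emph{small} box where \(F\) is uniformly convex. Both uniqueness of \(u_\ast\) on \(\mathcal C_P\) and the \(\liminf\) half of the growth law depend on it, because the convexity estimate says nothing about states without coordinates in that box.

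Theorem~\ref{thm:translated_feature_geometry} only describes \(\mathcal C_P\) globally. Linear independence only gives uniqueness of the representation of one fixed function. Neither implies that a function near \(u_0\) has parameters near \(\eta_0\). A priori, a near-optimal state with \(\|w\|_2\le R\) could use escaping centres, centres away from \(c_0,c_{j,0}\), or a different collision pattern with large cancelling weights, and your argument does not exclude this. The ``local identifiability from linear independence'' you invoke in the lower bound does not supply it either.

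The paper proves this stability in Lemma~\ref{lem:robust_local_parametrization}(i) via the cluster decomposition, Lemma~\ref{lem:translated_cluster_decomposition}:
\begin{enumerate}
\item Take \(v_n\in\mathcal M_P\) with \(v_n\to u_0\). Newton coordinates stay bounded by uniform Gram positivity.
\item Escaping clusters converge weakly to zero.
\item Since \(u_0\) already exhausts the budget (two slots for the \(K'\)-block, one per singleton), exactly two centres converge to \(c_0\), one to each \(c_{j,0}\), and none escape. The chart coordinates then converge to \((\eta_0,0)\).
\item A contradiction argument gives a radius \(r\). Density and compactness of the box extend the statement to \(\mathcal C_P\cap B_r(u_0)\).
\end{enumerate}

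\textbf{Secondary repairs.}
\begin{itemize}
\item \emph{Linear independence.} Theorem~\ref{thm:translated_feature_geometry} asserts no linear independence. Injectivity needs \(\{K^{(r)}(\cdot-c_0)\}_{r\le3}\cup\{K(\cdot-c_{j,0}),K'(\cdot-c_{j,0})\}\), a family of multiplicity \(2P\), not \(P+1\), and perturbing \(u_0\) does not help. Use the Fourier argument instead: a vanishing combination makes \(\widehat K\) times an exponential polynomial vanish on the positive-measure set \(\{\widehat K\ne0\}\), which forces all coefficients to vanish.
\item \emph{Extension to \(\tau<0\).} Complex-conjugate centres require analytic \(K\), which is not assumed. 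Evenness in \(h\), with \(C^4\) regularity from \(K\in H^{m+5}\), gives a one-sided \(C^2\) chart on \(\tau\ge0\). You need \(C^2\) rather than \(C^1\) for your Hessian argument anyway. Convexity on the box together with \(D_\eta F=0\) and \(\partial_\tau F>0\) at \(\tau=0\) then gives the unique constrained minimizer.
\item \emph{Strong convergence in the lower bound.} Remark~\ref{rem:translated_arbitrary_sequences} concerns the full-space minimizer, here \(g\), and its midpoint step fails on the nonconvex \(\mathcal C_P\). Your conclusion still holds via weak sequential closedness, uniqueness of \(u_\ast\), and \(\|v_n-g\|\to\|u_\ast-g\|\). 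More simply, \(\|v_n-u_0\|\le\sqrt{2J(v_n)}+\|g-u_0\|\le2\|g-u_0\|+o(1)\) puts \(v_n\) in the chart, where the local estimate forces \(\eta_n\to\eta_\ast\) and \(\tau_n\to0\).
\item \emph{The cap.} To respect \(\|w\|_2\le R\), take \(\tau_R=b_\ast^2/[2(R^2-S(\eta_\ast))]\).
\end{itemize}
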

Nonattainment persists for an open set of nearby targets: completion makes
their best approximation available. The original model can approach the
same loss only with growing coefficients.

\begin{remark}[The sharp constant]
\label{rem:robust_constant}
For a fixed \(g\in U\), let \(a,b,c\) be the coefficients and centre of
the collision block \(aK(\cdot-c)+bK'(\cdot-c)\) in \(u_\ast\). Then
\[
C=\frac{\ell b^2}{2},\qquad
\ell:=\left\langle u_\ast-g,
\frac a2K''(\cdot-c)+\frac b6K'''(\cdot-c)
\right\rangle_H>0.
\]
\end{remark}

Appendix~\ref{app:robust_nonattainment} gives the local description and
proofs; Section~\ref{subsec:manufactured_1d} tests the sharp law and the
faster decay of the excess error when the completed model represents the
target exactly.

\subsection{Transfer to derivative blocks}
\label{subsec:completion_algorithm}

We now describe a completion procedure that replaces nearly colliding
features with derivative blocks, starting with a pair of translated kernels.

\begin{example}[A two-slot replacement]
\label{ex:two_slot_transfer}
For the prescribed pairs used below, let \(K:\mathbb R^d\to\mathbb R\)
be a smooth kernel, \(c\in\mathbb R^d\) a centre, and
\(e\in\mathbb R^d\) a unit direction. Define the spatial directional derivative
\(D_eK(y):=e\cdot\nabla K(y)\) and write
\[
u_h(x)=w_+K(x-c-he)+w_-K(x-c+he),\qquad h>0,\quad |e|=1.
\]
Here \(h\) is the center half-gap, so the two centers are \(2h\) apart.
Taylor expansion suggests the two-slot replacement
\begin{equation}\label{eq:experimental_pair_transfer}
 u_h\ \longmapsto\ aK(\cdot-c)-bD_eK(\cdot-c),
 \qquad a=w_++w_-,\quad b=h(w_+-w_-).
\end{equation}
\end{example}

\noindent\textbf{Multiplicity budget.}
In the completed representation, a derivative block through order \(r\),
\[
\sum_{j=0}^{r}a_jK^{(j)}(\cdot-b),
\]
uses \(r+1\) slots, including slots whose coefficients vanish. Thus a single
\(K'\) uses two slots and a single \(K''\) uses three. The total cost of
all blocks must not exceed the original budget \(P\).

Completion replaces a collapsing cluster by derivative features within the
same multiplicity budget. For \(u\in\mathcal C_P\), the replacement in
Algorithm~\ref{alg:basic_completion} is constructed as follows:
\begin{enumerate}[label=\arabic*.,leftmargin=2.5em,labelsep=0.6em,itemsep=3pt]
\item Select a cluster \(C\) of whole feature blocks. Let \(u_C\) be its
contribution to the current state and \(\nu\) its total slot count.
\item Choose a centre \(\beta\) between the cluster's extreme centres.
\item Expand each translation about \(\beta\), retaining derivative orders
below \(\nu\):
\begin{equation}\label{eq:feature_taylor_replacement}
 wK^{(r)}(\cdot-c)\ \longmapsto\
 w\sum_{s=r}^{\nu-1}\frac{(\beta-c)^{s-r}}{(s-r)!}
 K^{(s)}(\cdot-\beta).
\end{equation}
\item Add the expanded terms and collect equal derivative orders to form
a single block \(T_C\) with at most \(\nu\) slots.
\end{enumerate}
This is the moment transfer in Proposition~\ref{prop:mixed_completion}.

Algorithm~\ref{alg:basic_completion} accepts this replacement when the
budget and error tests pass. It assumes an estimator
\(\operatorname{err}(u_C,T_C)\ge\|u_C-T_C\|_{H^m}\) for the
actually computed and stored trial, including conversion, evaluation and
rounding errors. Such an estimator is not constructed here.
Appendix~\ref{app:stable_transfer_rule} discusses accurate evaluation of
the same replacement and the additional bounds needed for validation.

\begin{algorithm}[H]
\caption{Completion of a feature cluster}
\label{alg:basic_completion}
\algrule
\begin{algorithmic}[1]
\Require A state \(u\in\mathcal C_P\); a cluster \(C\) using \(\nu\le P\)
slots; a centre \(\beta\) between its extreme centres; a tolerance
\(\tau\ge0\); and a transfer-error estimator \(\operatorname{err}\)
\Ensure Updated state and error estimate
\State Form and store \(T_C\) by~\eqref{eq:feature_taylor_replacement},
collecting equal derivative orders
\State \(u^{\rm comp}\gets u-u_C+T_C\)
\State \(e\gets\operatorname{err}(u_C,T_C)\)
\If{\(\#\text{slots}(u^{\rm comp})\le P\) and \(e\le\tau\)}
  \State \Return \((u^{\rm comp},e)\)
\Else
  \State \Return \((u,\infty)\)
\EndIf
\end{algorithmic}
\end{algorithm}

The experiments accept the replacement in~\eqref{eq:experimental_pair_transfer}
when the pair is close and the
measured absolute \(H^1\) change is small, then refit the coefficients and
resume optimization. The general transfer analysis and further computation details
are given in Appendix~\ref{app:mixed_completion_proof}.

\FloatBarrier

\section{Numerical tests of parameter growth and completion}\label{numerics}

Four experiments track neuron collisions, check the coefficient-growth law,
and compare standard optimization with completion on two manufactured PDEs.

\subsection{Training approaches singular limits}
\label{subsec:exp_escape}

On \((-10,10)\), two-neuron Deep Ritz fits minimize a trapezoidal
approximation of the energy
\begin{equation}\label{eq:numerics_tanh_objective}
 J(v)=20\int_{-10}^{10}
 \left[\frac12\bigl(|v'|^2+|v|^2\bigr)-fv\right]dx,
 \qquad v\in H^1(-10,10).
\end{equation}
The targets are \(u^*(x)=\operatorname{sech}^2x\), with
\(f=-{u^*}''+u^*\), and \(v^*(x)=x\operatorname{sech}^2x\), with
\(f=-{v^*}''+v^*\). The factor \(20\), the interval length, only rescales
the energy. Without a boundary load, the target need not minimize the
finite-domain Neumann problem, although its endpoint flux is exponentially
small. The first fit trains weights and biases; the second also trains
slopes.

\begin{paperfigure}[H]
\centering
\includegraphics[width=.78\linewidth]{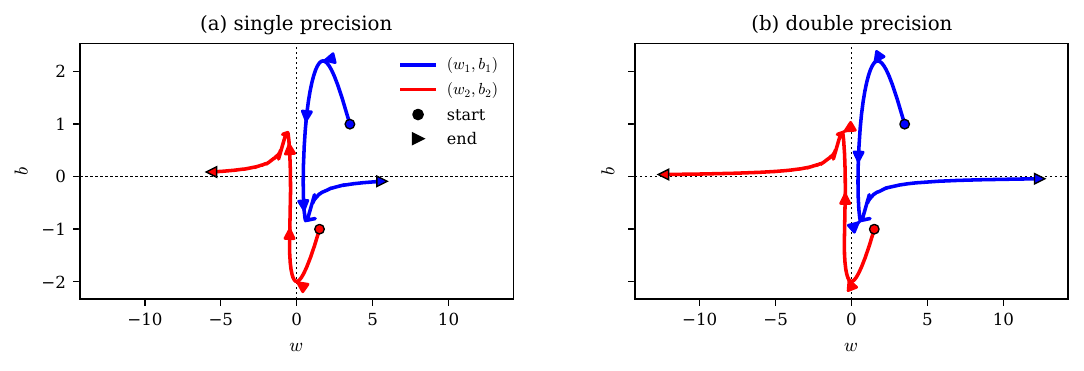}
\caption{Two tanh neurons merge during training as their weights grow with
opposite signs. Circles mark starts; triangles mark endpoints.
\textbf{(a)} Single precision; \textbf{(b)} double precision.}
\label{fig:tanh_param_traj}
\end{paperfigure}

In both, neurons coalesce and
weights grow in magnitude with opposite signs, while the network output
approaches the target
(Figures~\ref{fig:tanh_param_traj}--\ref{fig:tanh_neuron_evolution}).
These trajectories illustrate the missing-limit mechanism.

\begingroup
\setlength{\intextsep}{4pt}
\begin{paperfigure}[H]
\centering
\includegraphics[width=.92\linewidth]{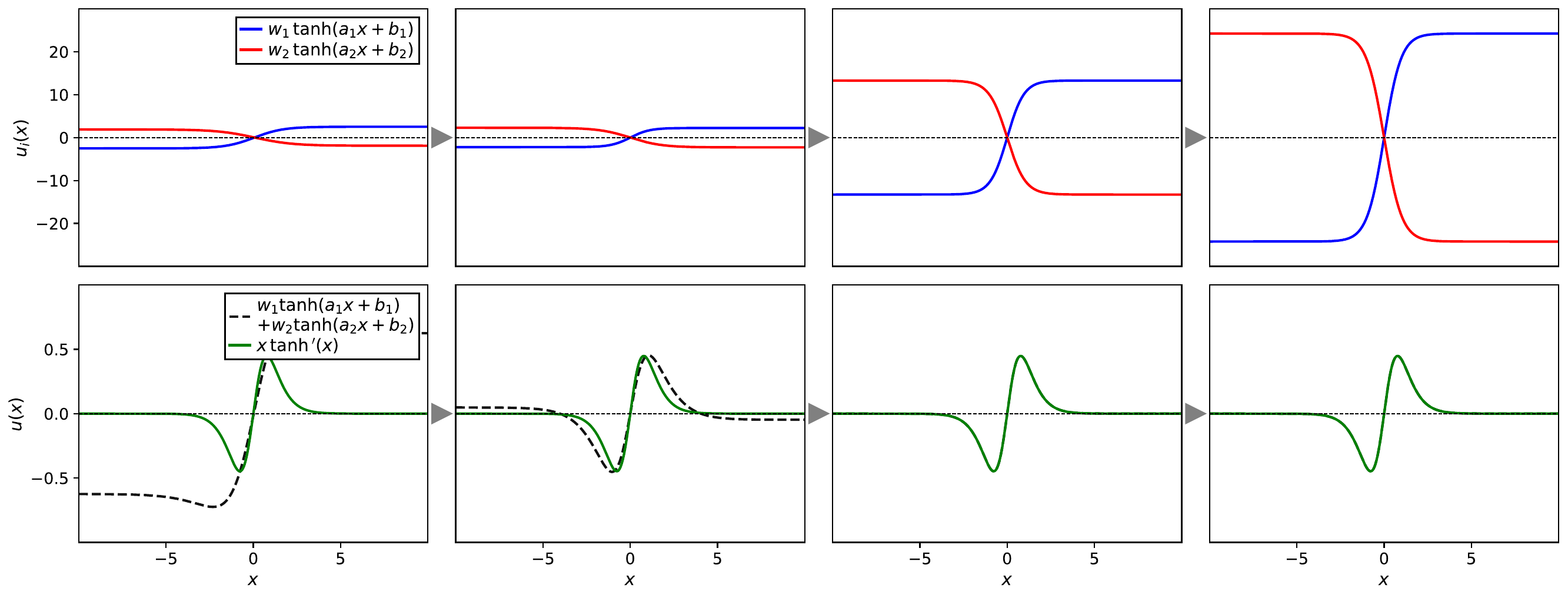}
\caption{Large neuron contributions nearly cancel, bringing their sum closer
to the target. Top: contributions; bottom: sum (dashed) and target (green).
The last two columns show independent refinements.}
\label{fig:tanh_neuron_evolution}
\end{paperfigure}
\endgroup

\subsection{Coefficient caps reveal the predicted growth law}
\label{subsec:manufactured_1d}

This experiment tests the coefficient-growth law of
Theorem~\ref{thm:robust_nonattainment}. For \(K(x)=e^{-x^2/2}\), the
manufactured modified Helmholtz solutions are
\[
g_\varepsilon=K'-\varepsilon q,\qquad
q=\tfrac16(K'''+\tfrac{21}{10}K'),\qquad
\varepsilon\in\{0,10^{-3},10^{-2}\}.
\]
For each fixed \(\varepsilon\), we minimize the energy \(J\) of
Theorem~\ref{thm:robust_nonattainment} with \(H=H^1(\mathbb R)\),
\begin{equation}\label{eq:manufactured_1d_energy}
 J(v)=\frac12\|v-g_\varepsilon\|_{H^1(\mathbb R)}^2
 =\int_{\mathbb R}\Bigl[\frac12\bigl(|v'|^2+|v|^2\bigr)
 -f_\varepsilon v\Bigr]dx+\frac12\|g_\varepsilon\|_{H^1(\mathbb R)}^2,
\end{equation}
where \(f_\varepsilon=g_\varepsilon-g_\varepsilon''\). Up to an additive
constant, this is the Ritz energy of \(-u''+u=f_\varepsilon\).

We search over sums of two shifted Gaussians, imposing the coefficient bound
\(\|w\|_2\le R\). We use 29 values of \(R\) between \(2\) and \(10^5\)
and refine the candidates using 80-digit arithmetic.
Figure~\ref{fig:manufactured_1d} compares the resulting approximations with
the completed reference \(u_0=K'\) and a prescribed difference quotient.

\begin{paperfigure}[H]
\centering
\includegraphics[width=\linewidth]{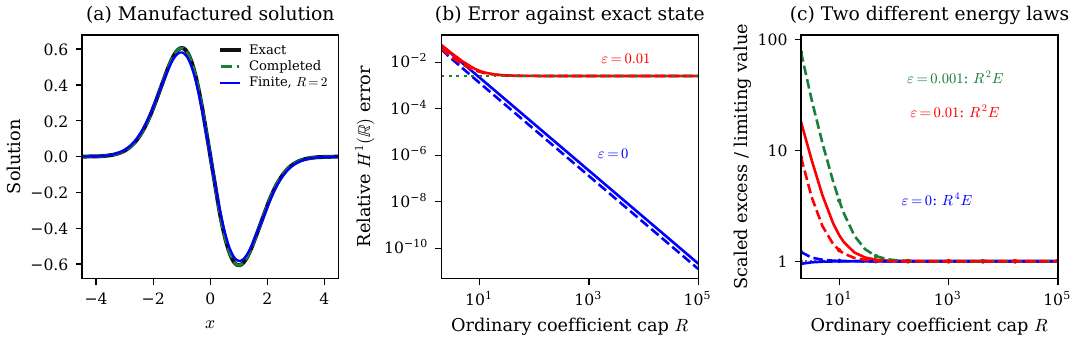}
\caption{Larger coefficient budgets improve the Gaussian approximation.
\textbf{(a)} States; \textbf{(b)} errors; \textbf{(c)} rescaled excess energies
approach their predicted limits. Dotted line: completed error for \(\varepsilon=0.01\).}
\label{fig:manufactured_1d}
\end{paperfigure}

Let \(u_R\) denote the best ordinary pair found under the cap. The computed
excess energies \(E_{\rm num}(R):=J(u_R)-J(u_0)\) follow the predicted scalings:
\[
E_{\rm num}(R)\sim
\begin{cases}
\dfrac{13\sqrt\pi}{1280}R^{-4}, & \varepsilon=0,\\[10pt]
\dfrac{\varepsilon\|q\|_{H^1}^2}{2}R^{-2}, & \varepsilon>0.
\end{cases}
\]
For \(\varepsilon=0\), completion represents the exact target with
coefficient norm one. At \(R=10^5\), the best ordinary pair found has
relative \(H^1\) error \(1.275\times10^{-11}\). For positive
\(\varepsilon\), the completed reference has nonzero error, while the
computed excess energies follow the predicted \(R^{-2}\) law.

\Needspace{5\baselineskip}
\subsection{Completion for a Poisson--Neumann problem}
\label{subsec:poisson_2d_experiment}

A Poisson--Neumann problem on \(\Omega=(-1,1)^2\) tests a model with
four Gaussian pairs and two singletons (ten slots). The problem
\(-\Delta u=f\), \(\partial_\nu u=g\) is solved by minimizing
\begin{equation}\label{eq:poisson_energy}
 J(u)=\int_\Omega\Bigl[\tfrac12|\nabla u|^2-fu\Bigr]dx
 -\int_{\partial\Omega}gu\,dS
 +\frac{\sigma}{2}\Bigl(\int_\Omega u\,dx-M\Bigr)^{2},
\end{equation}
with \(\sigma=1\). The data \(f=-\Delta u^\ast\), \(g=\partial_\nu u^\ast\)
and \(M=\int_\Omega u^\ast\) are manufactured from a target \(u^\ast\) in
the completion.

The mean term fixes the additive constant left undetermined
by the Neumann data. By the Poincar\'e--Wirtinger inequality,
\(\|\nabla u\|^2+\sigma(\int_\Omega u)^2\) controls \(\|u\|_{H^1}^2\).
Thus \(J\) satisfies Assumption~\ref{asu1} and \(u^\ast\) uniquely
minimizes it.

Figure~\ref{fig:poisson_2d_fields} shows start~16.
Pair separations are enlarged for visibility; connectors
mark the true centres. Error maps share a logarithmic scale, with values
below \(10^{-12}\) clipped for display.

\begin{paperfigure}[!ht]
\centering
\includegraphics[width=\linewidth]{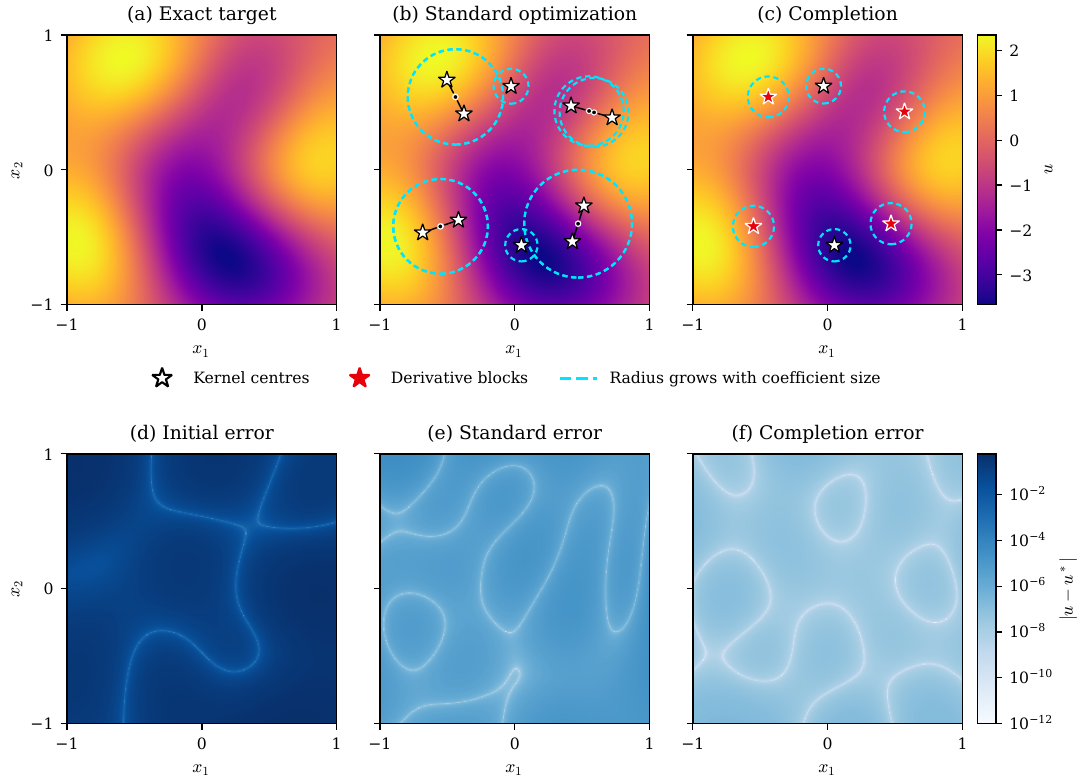}
\caption{Completion sharply reduces the Poisson error in a representative run.
Top: target and final approximations; bottom: error maps.
Larger circles indicate larger coefficients.}
\label{fig:poisson_2d_fields}
\end{paperfigure}

Both two-dimensional experiments compare \emph{standard optimization} and
\emph{completion} from ten shared local starts. For the Poisson problem,
these starts lie near the target centres
(Appendix~\ref{app:numerics_poisson}). At positive separation,
both fit the ordinary weights \(w_+,w_-\) against directly evaluated
translates in double precision. Coefficients are refitted at each geometry
(variable projection); L-BFGS-B optimizes centres, directions and \(\log h\)
with the same settings and an allowance of 8,000 coefficient fits.

Standard optimization retains every finite pair. Completion applies
\eqref{eq:experimental_pair_transfer} to eligible pairs, refits, and restarts
L-BFGS-B; completed blocks use kernel and directional-derivative features
directly. Pairs are fixed in advance, and all starts, including abnormal
optimizer returns, are retained at their last accepted state
(Appendix~\ref{app:numerics_raw_main}).
Reported work counts every coefficient fit through termination, including
line-search and transfer evaluations.

The PDE comparisons use ordinary weights and direct evaluation of finite
kernel translates. Supplementary evaluation tests and optimizer controls
are collected in Appendix~\ref{sec:numerics_details}, with reproduction
instructions in the \href{https://github.com/danielfdzm/singular-representations-pde}{code repository}.

Both experiments use tensor-product Gauss--Legendre quadrature. The order gives
the number of points per coordinate and on each boundary edge; its square
gives the number of interior points.
For the Poisson problem, training uses quadrature of order 32; reported errors
use independent quadrature of order 96.

A coefficient norm always means the actual weights of
the current representation: ordinary kernel weights and
kernel/derivative coefficients for completed pairs.

\FloatBarrier

\begin{table}[H]\centering\footnotesize
\setlength{\tabcolsep}{4pt}
\caption{Final accuracy, coefficient sizes, and work for the Poisson--Neumann
problem over ten shared starts.}
\label{tab:poisson_raw_results}
\begin{tabular}{@{}lcc@{}}\toprule
 & Standard optimization & Completion\\\midrule
Relative \(H^1\) error & \(9.8\times10^{-6}\text{--}6.2\times10^{-5}\) & \(1.7\times10^{-8}\text{--}2.6\times10^{-7}\)\\
\quad median & \(1.8\times10^{-5}\) & \(8.1\times10^{-8}\)\\
Coefficient norm & \(2.3\times10^{2}\text{--}5.0\times10^{3}\) & \(3.53694\text{--}3.53698\)\\
Smallest center half-gap & \(2.1\times10^{-4}\text{--}3.1\times10^{-3}\) & \(0\)\\
Solves (median) & 758--1785 (1204.5) & 377--525 (431)\\
Completed pairs & 0 & 4\\
Normal optimizer returns & 5/10 & 10/10\\
\bottomrule\end{tabular}
\end{table}

Completion transfers all four pairs on every start and reduces the median
relative \(H^1\) error from \(1.8\times10^{-5}\) to
\(8.1\times10^{-8}\) (Table~\ref{tab:poisson_raw_results}). Its final
coefficient norms are close to the completed target's \(3.5369\);
ordinary weights reach a norm of \(5.0\times10^3\). Five standard runs
have abnormal line-search returns; all completion runs terminate normally.

Figure~\ref{fig:poisson_2d_trajectories} includes every start.

\begingroup
\setlength{\intextsep}{4pt}
\begin{paperfigure}[H]
\centering
\includegraphics[width=.94\linewidth]{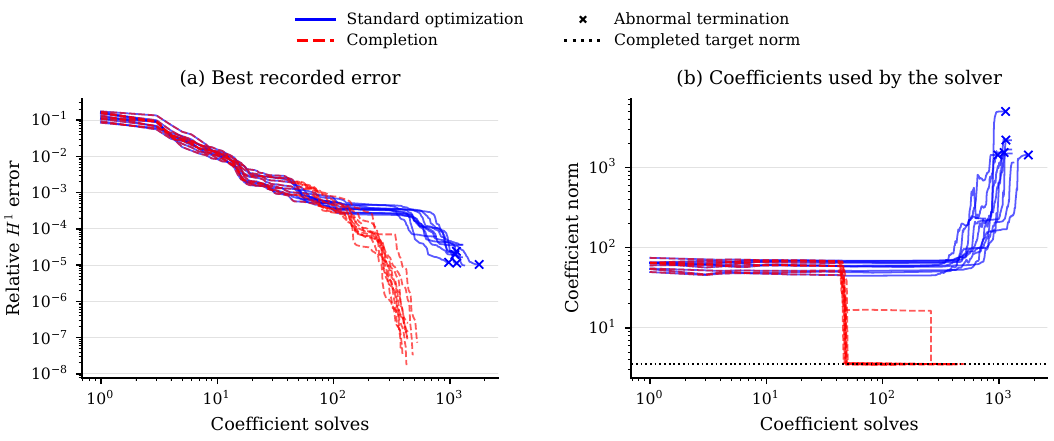}
\caption{Completion improves Poisson accuracy and controls coefficient growth
across ten starts. Left: best relative error so far; right: coefficient norm.
Crosses mark abnormal termination.}
\label{fig:poisson_2d_trajectories}
\end{paperfigure}
\endgroup

For the Poisson problem, completion reaches relative error \(10^{-5}\) from all ten starts
(median 291.5 solves); standard optimization reaches it from one start,
after 1,283 solves (Table~\ref{tab:raw_common_accuracy_work}). Median total work is
431 solves for completion and 1,204.5 for standard optimization. The
higher-order training comparison gives similar median errors
(Table~\ref{tab:raw_training_quadrature}).

\subsection{Completion with Mat\'ern features}
\label{subsec:matern_2d_experiment}

We test a nonlinear fourth-order Swift--Hohenberg problem on
\(\Omega=(-1,1)^2\) with two Mat\'ern pairs and one singleton (five slots).
We minimize the energy
\begin{equation}\label{eq:matern_energy}
 J(u)=\int_\Omega\Bigl[\tfrac12\bigl((1+\Delta)u\bigr)^2+\tfrac{\rho}{2}u^2
 +\tfrac14u^4-fu\Bigr]\,dx
 -\int_{\partial\Omega}\Bigl[m\,\partial_\nu u+qu\Bigr]\,dS,
\end{equation}
with \(\rho=1\) and natural boundary conditions.

The Mat\'ern energy is strictly convex on \(H^2(\Omega)\), since
\[
 D^2J(u)[v,v]=\|(1+\Delta)v\|_{L^2}^2+\|v\|_{L^2}^2
 +3\int_\Omega u^2v^2\,dx\geq\|v\|_{L^2}^2.
\]
The loads
\(f=(1+\Delta)^2u^\ast+u^\ast+(u^\ast)^3\), \(m=(1+\Delta)u^\ast\) and
\(q=-\partial_\nu((1+\Delta)u^\ast)\) are manufactured from a target
\(u^\ast\) in the completed model. They make \(u^\ast\) stationary and
hence the unique minimizer.

The model uses translates of the \(C^8\) Mat\'ern kernel with smoothness
\(\nu_{\mathrm M}=9/2\) and length scale \(\ell=3/\sqrt7\),
\begin{equation}\label{eq:matern_kernel}
 \Phi(y)=e^{-s}\Bigl(1+s+\tfrac37s^2+\tfrac{2}{21}s^3+\tfrac{1}{105}s^4\Bigr),
 \qquad s=\sqrt7\,|y|,\quad y\in\mathbb R^2,
\end{equation}
whose regularity supports the fourth-order manufactured load. Its Fourier
transform decays like \((1+|\xi|^2)^{-(\nu_{\mathrm M}+1)}\) and never vanishes, so
Assumption~\ref{asu:kernel} holds \cite{Matern1960,Stein1999}.

The Mat\'ern quadratic-energy error is
\(e_Q=\|u-u^\ast\|_Q/\|u^\ast\|_Q\), where
\(\|v\|_Q^2=\|(1+\Delta)v\|_{L^2}^2+\|v\|_{L^2}^2\).

We use the shared optimization procedure described in Section~\ref{subsec:poisson_2d_experiment}.
Training uses quadrature of order 64; reported errors use independent
quadrature of order 96.

Completion reduces the median relative \(H^1\) error from
\(1.7\times10^{-5}\) to \(2.3\times10^{-7}\)
(Table~\ref{tab:matern_raw_results}).

\begin{table}[!ht]\centering\footnotesize
\setlength{\tabcolsep}{4pt}
\caption{Final accuracy, coefficient sizes, and work for the Mat\'ern problem
over ten shared starts.}
\label{tab:matern_raw_results}
\begin{tabular}{@{}lcc@{}}\toprule
 & Standard optimization & Completion\\\midrule
Relative \(H^1\) error & \(2.1\times10^{-6}\text{--}3.5\times10^{-5}\) & \(3.4\times10^{-8}\text{--}1.8\times10^{-5}\)\\
\quad median & \(1.7\times10^{-5}\) & \(2.3\times10^{-7}\)\\
Relative \(e_Q\) & \(4.8\times10^{-6}\text{--}7.4\times10^{-5}\) & \(2.5\times10^{-8}\text{--}3.4\times10^{-5}\)\\
Coefficient norm & \(7.0\times10^{1}\text{--}1.0\times10^{3}\) & \(2.5\times10^{0}\text{--}1.0\times10^{2}\)\\
Smallest center half-gap & \(7.5\times10^{-4}\text{--}1.3\times10^{-2}\) & \(0\)\\
Fits (median) & 149--681 (225.5) & 108--713 (255.5)\\
Completed pairs & 0 & 1--2\\
Normal optimizer returns & 9/10 & 10/10\\
\bottomrule\end{tabular}
\end{table}

Nine starts transfer both pairs;
start~1 transfers only one and ends at \(1.8\times10^{-5}\), with
coefficient norm about \(100\). The remaining completion runs have
coefficient norms close to the target's \(2.4637\), whereas ordinary
weights reach a norm of \(1.0\times10^3\)
(Figure~\ref{fig:matern_2d_trajectories}). Standard optimization has one
abnormal line-search return; completion has ten normal returns.

\FloatBarrier

Higher-order training is checked in
Table~\ref{tab:raw_training_quadrature}; the failure to complete start~1
persists and its returned error increases.

\begin{paperfigure}[H]
\centering
\includegraphics[width=\linewidth]{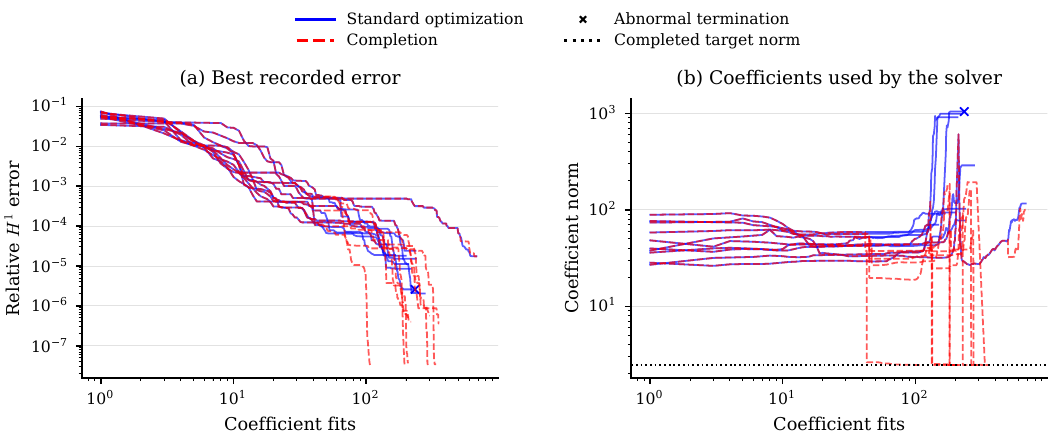}
\caption{Completion improves Mat\'ern accuracy and reduces coefficient growth.
Left: best relative error so far; right: coefficient norm.
Crosses mark abnormal termination.}
\label{fig:matern_2d_trajectories}
\end{paperfigure}

Nine completion starts reach \(10^{-5}\), against three for standard
optimization (Table~\ref{tab:raw_common_accuracy_work}). Median total
work is 255.5 coefficient fits for completion and 225.5 for standard
optimization. Completion improves accuracy here at a higher median cost.

\begin{table}[!ht]\centering\footnotesize
\setlength{\tabcolsep}{4pt}
\caption{Work needed to reach each relative \(H^1\) error threshold.
Counts are out of ten starts; medians include only starts that reach the threshold.}
\label{tab:raw_common_accuracy_work}
\begin{tabular}{@{}llrrrr@{}}\toprule
 & & \multicolumn{2}{c}{Standard optimization} & \multicolumn{2}{c}{Completion}\\
\cmidrule(lr){3-4}\cmidrule(l){5-6}
Problem & \(\tau\) & Reached & Median & Reached & Median\\\midrule
Poisson & \(1.0\times10^{-3}\) & 10/10 & 51.5 & 10/10 & 69\\
 & \(1.0\times10^{-5}\) & 1/10 & 1283 & 10/10 & 291.5\\
 & \(1.0\times10^{-7}\) & 0/10 & --- & 7/10 & 427\\
\midrule
Mat\'ern & \(1.0\times10^{-3}\) & 10/10 & 20 & 10/10 & 20\\
 & \(1.0\times10^{-5}\) & 3/10 & 169 & 9/10 & 180\\
 & \(1.0\times10^{-7}\) & 0/10 & --- & 5/10 & 201\\
\bottomrule\end{tabular}
\end{table}

\section{Discussion and perspectives}\label{conopen}

An accurate solution can have an unstable neural representation. The
coercivity gap explains how a decreasing loss can coexist with growing
weights and merging neurons: the model may be approaching a state that it
cannot represent. Completion makes these missing limits available as
model features.

For a fixed number of shifted kernels on the real line, derivative
completion captures every missing limit and makes the best approximation
attainable under the stated assumptions. The analysis also shows that
nonattainment in the original class
persists under small target perturbations and quantifies the weight growth
needed to approach the optimal loss. Finding the missing optimal states
remains a separate task for the optimizer.

The experiments track how ordinary optimization approaches singular
representations. Completion changes the model available to the optimizer,
but the loss remains nonconvex in the model parameters, so training is not
guaranteed to find the best state. The PDE comparisons use known solutions,
prescribed pairs and initializations near target configurations, so broader
performance remains to be established.

Future work should focus on three directions:
\begin{itemize}[leftmargin=2.5em,labelsep=0.6em]
\item \textbf{Automatic completion.}
Detect useful collisions during training, handle larger groups of neurons,
and decide when more features are needed.
\item \textbf{Reliable and efficient algorithms.}
Develop affordable checks for changes of representation, establish
convergence guarantees, and test accuracy and cost on broader PDE benchmarks.
\item \textbf{Richer models.}
Identify missing limits in higher dimensions and deeper networks, then
design representations that include them.
\end{itemize}

\section*{Acknowledgements}
The author acknowledges funding from Schaeffler Technologies AG \& Co. KG.
The author is deeply grateful to Enrique Zuazua for his constant support and his encouragement
to bring this work to publication, and thanks Christopher Straub (Fraunhofer IISB,
Erlangen) for a careful critical reading of an earlier version of the manuscript.


\appendix
\fontsize{10}{12}\selectfont

\section{Proofs of the main results}\label{sec:proofs}

\subsection{Deep neural tanh-networks}
\label{app:deep_tanh_proof}

\begin{proof}[Proof of Theorem~\ref{thm:tanh_forced_collision}]
Let \(\alpha>0\) be a modulus of strong convexity for \(J\), as in
Assumption~\ref{asu1}. The midpoint inequality and the minimality of \(u^\ast\) give
\[
\frac{\alpha}{8}\|u_{\theta_n}-u^\ast\|_{H^m(\Omega)}^2
\le\frac12\bigl(J(u_{\theta_n})-J(u^\ast)\bigr)\longrightarrow0.
\]
Thus \(u_{\theta_n}\to u^\ast\) strongly in \(H^m(\Omega)\). It remains to
prove the assertion of Remark~\ref{rem:deep_arbitrary_sequences}.
Accordingly, assume that \(u_{\theta_n}\to u\) strongly and that \(u\) is
not representable by the fixed architecture. Define
\(T_{\mathbf z}c:=c_0+\sum_{j=1}^{p_L}c_jh^L_{\mathbf z,j}\big|_\Omega\),
with \(c\) ranging over \(V=\mathbb R^{p_L+1}\) when an output bias is
allowed and over \(V=\{0\}\times\mathbb R^{p_L}\) otherwise.

\begin{step1}[Injectivity of the output map]
The null-net theorem for deep neural tanh-networks states that a nontrivial regular
network cannot have identically zero output, even with an output constant
\cite[Theorems~3--4]{VlacicBolcskei2021}. Regularity requires every hidden
neuron to have a path to an active output and prohibits identical or
sign-opposite incoming parameters for neurons with the same parents
\cite[Lemma~1]{VlacicBolcskei2021}.

Suppose \(\Delta_{\mathrm{deep}}(\mathbf z)>0\) and \(T_{\mathbf z}c=0\)
in \(H^m(\Omega)\). If some \(c_j\), \(j\ge1\), is nonzero, retain those
last-layer neurons and all their ancestors along nonzero-weight edges.
Delete the other hidden neurons and all zero-weight edges. This preserves
the output, and every retained hidden neuron has a path to an active output.
Since \(\Delta_{\mathrm{deep}}(\mathbf z)>0\), every incoming row is nonzero.
Thus each retained hidden neuron has at least one parent, and all its
nonzero incoming edges remain. Induction over the layers shows that its
graph level is still its original layer.

Two retained neurons with the same parent set therefore lie in the same
layer. If their incoming parameters agreed up to sign, their original
augmented rows, padded with zeros outside that parent set, would also
agree up to sign. This contradicts \(\Delta_{\mathrm{deep}}(\mathbf z)>0\).
The pruned network is consequently regular and nontrivial. Its output is
zero almost everywhere on \(\Omega\), hence everywhere on \(\Omega\) by
continuity and on \(\mathbb R^d\) by real analyticity. This contradicts the
null-net theorem. If instead all \(c_j=0\) for \(j\ge1\), then
\(T_{\mathbf z}c=c_0=0\). In either case the kernel of \(T_{\mathbf z}\)
is trivial.
\end{step1}

\begin{step2}[A uniform bound on the output coefficients]
The spatial derivatives through order \(m\) depend continuously on the
parameters, uniformly on a compact box containing \(\Omega\). Hence
\(\mathbf z\mapsto h^L_{\mathbf z,j}\) is continuous into
\(H^m(\Omega)\), and \((\mathbf z,c)\mapsto T_{\mathbf z}c\) is jointly
continuous. For \(R,s>0\), set
\[
K_{R,s}:=\{\mathbf z:\|\mathbf z\|_2\le R,\
\Delta_{\mathrm{deep}}(\mathbf z)\ge s\}.
\]
This set is compact. If it is nonempty, continuity and Step~1 give
\[
\sigma_{R,s}:=
\min_{\substack{\mathbf z\in K_{R,s}\\ c\in V,\ \|c\|_2=1}}
\|T_{\mathbf z}c\|_{H^m(\Omega)}>0.
\]
Indeed, the minimum is attained on a compact set, and a zero minimum would
contradict injectivity. Scaling \(c\) then gives
\begin{equation}\label{eq:deep_uniform_readout_bound}
 \|c\|_2\le C_{R,s}\|T_{\mathbf z}c\|_{H^m(\Omega)}
 \qquad(\mathbf z\in K_{R,s},\ c\in V).
\end{equation}
Here \(C_{R,s}=\sigma_{R,s}^{-1}\); the estimate is vacuous if
\(K_{R,s}\) is empty.
\end{step2}

\begin{step3}[Parameter divergence and degeneracy]
If \(\|\theta_n\|_2\) did not tend to infinity, a bounded subsequence
would have a further subsequence
converging to some \((c_*,\mathbf z_*)\) with
\(u=T_{\mathbf z_*}c_*\) by continuity. This contradicts the
nonrepresentability of \(u\) and proves (i).

Now take a subsequence with \(\|\mathbf z_{n_k}\|_2\le R\). Part (i)
forces \(\|c_{n_k}\|_2\to\infty\). If
\(\Delta_{\mathrm{deep}}(\mathbf z_{n_k})\) did not tend to zero, some
further subsequence would satisfy \(\Delta_{\mathrm{deep}}\ge s>0\) and
hence lie in \(K_{R,s}\). The states are bounded, so
\eqref{eq:deep_uniform_readout_bound} would bound the output coefficients
on that subsequence, a contradiction. This proves (ii).

Finally, if (iii) failed, there would be an \(\eta\in(0,1)\) and a
subsequence on which both
\[
\|\mathbf z_n\|_2\le\eta^{-1}-1,
\qquad \Delta_{\mathrm{deep}}(\mathbf z_n)\ge\eta.
\]
This contradicts (ii) and proves (iii).
\end{step3}
\end{proof}

\subsection{Cluster decomposition and closure}
\label{subsec:translated_feature_geometry_proof}
\label{subsec:attainment_corollaries_proof}

\subsubsection{Translations and derivatives}
\label{subsec:prelim_translation}

Throughout, \(m\ge1\) is an integer, and we write \(\tau_cv:=v(\cdot-c)\).
We use the unitary Fourier transform and the Sobolev norm
\[
\|v\|_{H^m(\mathbb R^d)}^2=\int p_m|\widehat v|^2\,d\xi,
\qquad p_m(\xi):=\sum_{|\alpha|\le m}\xi^{2\alpha}.
\]
For fixed \(v,w\in H^m\), the correlation
\(y\mapsto\langle\tau_yv,w\rangle_{H^m}\) belongs to
\(C_0(\mathbb R^d)\), since it is a finite sum of convolutions of
\(L^2\) functions.
Thus \(\tau_{y_n}v\rightharpoonup0\) whenever \(|y_n|\to\infty\).

For real-valued \(0\ne K\in H^{m+P-1}(\mathbb R)\), set \(F(t):=\tau_tK\).
Sobolev difference quotients show that \(F\in C^{P-1}(\mathbb R;H^m)\) and
\(F^{(r)}(t)=(-1)^r\tau_tK^{(r)}\). For \(r<P\), the Hermite--Genocchi
formula~\cite{deBoor1969} defines Newton coordinates for real nodes,
allowing repetitions:
\begin{equation}\label{eq:completed_newton_coordinates}
 E_r(t_1,\ldots,t_{r+1}):=r!\,[t_1,\ldots,t_{r+1}]F
 =\mathbb E\,F^{(r)}\Bigl(\sum_{j=1}^{r+1}S_jt_j\Bigr).
\end{equation}
Here \(S\) is uniformly distributed on the standard \(r\)-simplex and is
a point mass for \(r=0\). The coordinates remain continuous when nodes coincide:
\(E_r(b,\ldots,b)=(-1)^r\tau_bK^{(r)}\)
\cite{BatenkovYomdin2014,deBoor1969}.
The change from Hermite data \(F^{(i)}(b_q)\), \(i<\nu_q\), at distinct
centres to \(E_0,\ldots,E_{N-1}\), \(N=\sum_q\nu_q\), is triangular and
invertible.

These derivative families are linearly independent. Indeed, if
\(\lambda_1,\ldots,\lambda_Q\) are distinct, the derivatives involved lie in
\(H^m\), and \(\sum_q\sum_{r<\nu_q}\omega_{q,r}\tau_{\lambda_q}K^{(r)}=0\),
then Fourier transformation yields an exponential polynomial
\(\sum_qe^{-i\lambda_q\xi}\sum_{r<\nu_q}\omega_{q,r}(i\xi)^r\) vanishing
almost everywhere on the positive-measure set \(\{\widehat K\ne0\}\). The
identity theorem and the uniqueness of exponential polynomials imply
\(\omega=0\). Every finite Gram matrix of such a family is therefore
positive definite; no pointwise nonvanishing of \(\widehat K\) is needed.

\subsubsection{Cluster decomposition}

The following decomposition of bounded sequences in \(\mathcal M_P\)
will be used to prove Theorem~\ref{thm:translated_feature_geometry}.

\begin{lemma}[Cluster decomposition]
\label{lem:translated_cluster_decomposition}
Let \(0\ne K\in H^{m+P-1}(\mathbb R)\) be real-valued, and let
\((u_n)\subset\mathcal M_P\) be bounded in \(H^m(\mathbb R)\).
After passing to a subsequence, there is a finite decomposition
\[
u_n=\sum_A\tau_{c_{A,n}}g_{A,n},\qquad
g_{A,n}\longrightarrow g_A\quad\hbox{strongly in }H^m(\mathbb R),
\qquad \tau_cv=v(\cdot-c),
\]
where the reference centres of distinct clusters satisfy
\(|c_{A,n}-c_{A',n}|\to\infty\).
Each \(g_A\) is a finite sum of kernel derivatives at distinct relative
centres, and the total multiplicity, summed over all \(A\), is at most \(P\).
The reference centres have limits in \(\mathbb R\cup\{-\infty,+\infty\}\).
At most one reference centre has a finite limit \(c_A\); its contribution converges
strongly to \(\tau_{c_A}g_A\). Every other contribution converges weakly
to zero. The decomposition may be empty.
\end{lemma}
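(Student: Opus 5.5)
Write \(u_n=\sum_{k=1}^P w_{k,n}\tau_{b_{k,n}}K\). I would first cluster the centres by a diagonal subsequence argument. Extract a subsequence along which every pairwise difference \(b_{k,n}-b_{j,n}\) converges in \(\mathbb R\cup\{\pm\infty\}\), and along which each \(b_{k,n}\) converges in \(\mathbb R\cup\{\pm\infty\}\). Call \(k\sim j\) when \(b_{k,n}-b_{j,n}\) stays bounded. This is an equivalence relation, since bounded differences compose. Its classes are the clusters \(A\). In each cluster I fix a reference index and set \(c_{A,n}:=b_{k_A,n}\). Distinct clusters then satisfy \(|c_{A,n}-c_{A',n}|\to\infty\). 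Relative centres \(\beta_{k,n}:=b_{k,n}-c_{A,n}\) converge to finite limits. Among these limits, the distinct values \(\lambda_{A,1},\dots,\lambda_{A,Q_A}\) have multiplicities \(\nu_{A,q}\) given by the number of indices converging to each value. Then \(g_{A,n}:=\sum_{k\in A}w_{k,n}\tau_{\beta_{k,n}}K\), and the total multiplicity is at most \(P\). At most one cluster can have a finite limit of its reference centre, since two such clusters would have a bounded difference. Every other cluster has \(c_{A,n}\to\pm\infty\).

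The core step is to show that each \(g_{A,n}\) converges strongly, and that the limit lies in the span \(V_A\) of \(\tau_{\lambda_{A,q}}K^{(r)}\), \(r<\nu_{A,q}\). I would rewrite \(g_{A,n}\) in Newton coordinates (\ref{eq:completed_newton_coordinates}) with nodes ordered by limit group. Newton's formula expresses \(g_{A,n}=\sum_{r}\tilde\omega_{r,n}E_r(\beta_{1,n},\dots,\beta_{r+1,n})\), where the coefficients are the triangular, invertible transform of the weights \(w_{k,n}\). This is valid because \(F\in C^{P-1}(\mathbb R;H^m)\) gives an exact finite identity even at positive separation. By the continuity of \(E_r\) in its nodes, each \(E_r(\beta_n)\) converges strongly to \(E_r(\beta_\infty)\). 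These limits form, up to the triangular change noted in the preliminaries, exactly the Hermite family spanning \(V_A\), and that family is linearly independent. So for large \(n\) the Gram matrix of \(\{E_r(\beta_n)\}\) is uniformly positive definite, and \(\|\tilde\omega_n\|\lesssim\|g_{A,n}\|_{H^m}\).

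The main obstacle is bounding \(\|g_{A,n}\|\) itself: only \(\|u_n\|\) is bounded, and clusters could cancel each other. Here I use separation. Collect all Newton vectors of all clusters, \(\tau_{c_{A,n}}E_r(\beta^A_n)\). Cross-cluster inner products tend to zero, because correlations of fixed functions are in \(C_0\), and this holds uniformly since the \(E_r(\beta_n)\) lie in a compact set of \(H^m\). Diagonal blocks converge to positive definite Gram matrices. So the full Gram matrix is uniformly positive definite for large \(n\), which gives \(\|\tilde\omega\|^2\lesssim\|u_n\|^2\le C\). Passing to a further subsequence, \(\tilde\omega_{r,n}\to\tilde\omega_r\). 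Therefore \(g_{A,n}\to g_A:=\sum_r\tilde\omega_r E_r(\beta_\infty)\in V_A\) strongly, and \(g_A\) has the stated derivative form.

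The remaining claims follow directly. For a cluster with finite limit \(c_A\), continuity of translation in \(H^m\) gives \(\tau_{c_{A,n}}g_{A,n}\to\tau_{c_A}g_A\) strongly. For clusters escaping to infinity, \(\tau_{c_{A,n}}g_{A,n}=\tau_{c_{A,n}}(g_{A,n}-g_A)+\tau_{c_{A,n}}g_A\). The first term tends to zero in norm, and the second tends to zero weakly by the \(C_0\) correlation fact. The empty decomposition covers \(P\) terms with all weights absorbed trivially, for instance when \(u_n=0\).
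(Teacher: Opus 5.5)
Your proposal is correct and follows the paper's proof essentially step for step. You cluster by bounded pairwise differences, bound the coefficients through a uniformly positive definite Gram matrix of Newton coordinates (within-cluster independence plus cross-cluster correlation decay), and then separate strong from weak limits according to whether the reference centre converges. The differences are cosmetic: you use one Newton basis per cluster instead of one per collision subcluster, and you do not first merge coincident centres. Without that merging, your weight-to-Newton map need not be invertible, but your argument only uses the forward expansion, so nothing breaks.
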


\begin{proof}[Proof of Lemma~\ref{lem:translated_cluster_decomposition}]
\begin{step1}[Cluster coordinates]
Combine terms with repeated centres and discard terms with zero coefficients.
After passing to a subsequence, we may assume that there are \(N\le P\)
distinct centres \(t_{j,n}\), and that all centres and pairwise differences
converge in \([-\infty,\infty]\).
The case \(N=0\) is immediate. Partition the indices into clusters
\(A\) according to whether \(t_{j,n}-t_{k,n}\) has a finite limit.
For each cluster, choose a reference centre
\(c_{A,n}=t_{j_A,n}\) and write \(h_{j,n}=t_{j,n}-c_{A,n}\to\lambda_j\).
The reference centres of distinct clusters separate to infinity. Within
\(A\), group indices with a common limiting offset \(\lambda_B\) into
collision subclusters \(B\in\mathfrak B(A)\).
For \(B=\{j_0,\ldots,j_{\nu_B-1}\}\) and \(0\le r<\nu_B\), set
\[
e_{B,r,n}=E_r(h_{j_0,n},\ldots,h_{j_r,n}),\qquad
E_{B,r,n}=\tau_{c_{A,n}}e_{B,r,n}.
\]
By Section~\ref{subsec:prelim_translation}, these functions span the
original translates, and \(e_{B,r,n}\to F^{(r)}(\lambda_B)\) in \(H^m\)
regardless of the relative collision rates.
\end{step1}

\begin{step2}[Bounded coefficients]
Linear independence within each cluster implies that its limiting Gram
matrix is positive definite. The Gram entries involving different clusters
tend to zero by correlation decay, convergence of the local bases, and
separation of the reference centres. Thus, for some \(\eta>0\) and all large \(n\),
\[
\Bigl\|\sum_{A,B,r}q_{B,r}E_{B,r,n}\Bigr\|_{H^m}
\ge\eta\Bigl(\sum_{A,B,r}|q_{B,r}|^2\Bigr)^{1/2}.
\]
Write \(u_n=\sum_{A,B,r}\gamma_{B,r,n}E_{B,r,n}\). The boundedness of
\((u_n)\) therefore implies that the coefficients are bounded. Passing to
a further subsequence, we may
assume \(\gamma_{B,r,n}\to\gamma_{B,r}\). Then, strongly in \(H^m\),
\[
g_{A,n}:=\sum_{B\in\mathfrak B(A)}\sum_{r<\nu_B}\gamma_{B,r,n}e_{B,r,n}
\longrightarrow
g_A:=\sum_{B\in\mathfrak B(A)}\sum_{r<\nu_B}
(-1)^r\gamma_{B,r}K^{(r)}(\cdot-\lambda_B),
\]
and \(\sum_A\sum_{B\in\mathfrak B(A)}\nu_B=N\le P\).
\end{step2}

\begin{step3}[Limits]
If \(c_{A,n}\to c_A\in\mathbb R\), continuity of translations gives
\(\tau_{c_{A,n}}g_{A,n}\to\tau_{c_A}g_A\) strongly. If
\(|c_{A,n}|\to\infty\), then \(\tau_{c_{A,n}}g_A\rightharpoonup0\), while
\(\|\tau_{c_{A,n}}(g_{A,n}-g_A)\|_{H^m}\to0\). Since the reference centres
of distinct clusters separate, at most one has a finite limit.
\end{step3}
\end{proof}

\subsubsection{Closure and variational consequences}

\begin{proof}[Proof of Theorem~\ref{thm:translated_feature_geometry}]
\begin{step1}[Strong approximation]
Consider a block \(\sum_{r<\nu}w_rK^{(r)}(\cdot-b)\). Choose distinct
\(\lambda_0,\ldots,\lambda_{\nu-1}\) and solve
\(\sum_{j<\nu}a_j^\varepsilon(\varepsilon\lambda_j)^r/r!=(-1)^rw_r\) for
\(0\le r<\nu\). Then \(a^\varepsilon=O(\varepsilon^{-(\nu-1)})\), while
the Banach-valued Peano remainder of \(F\) at \(b\) is
\(o(\varepsilon^{\nu-1})\), so
\(\sum_{j<\nu}a_j^\varepsilon F(b+\varepsilon\lambda_j)
\to\sum_{r<\nu}w_rK^{(r)}(\cdot-b)\) in \(H^m\). Applying this construction
to each block gives an approximation using at most \(P\) translates.
\end{step1}

\begin{step2}[Closure]
Every weakly convergent sequence \((u_n)\subset\mathcal M_P\) is bounded.
By Lemma~\ref{lem:translated_cluster_decomposition}, its limit is the
derivative sum associated with the reference centre that has a finite
limit, or zero if all reference centres escape to infinity. In either case,
the total multiplicity is at most \(P\). Together with the strong
approximation in Step~1, this characterizes both the weak limits and the
strong closure.

To prove weak sequential closedness, let
\(v_n\in\mathcal C_P\) and \(v_n\rightharpoonup v\). Choose
\(u_n\in\mathcal M_P\) with \(\|u_n-v_n\|_{H^m}\le1/n\); then
\(u_n\rightharpoonup v\), so \(v\in\mathcal C_P\).
\end{step2}
\end{proof}

\begin{proof}[Proof of Remark~\ref{rem:translated_arbitrary_sequences}]
\begin{step1}[Strong convergence and representation]
Let \(\alpha>0\) be a modulus of strong convexity for \(J\). The midpoint
inequality gives
\(\tfrac{\alpha}{8}\|u_n-u^\ast\|_{H^m}^2\le\tfrac12(J(u_n)-J(u^\ast))\to0\).
Hence \(u_n\to u^\ast\) strongly. The derivative representation then follows
from Theorem~\ref{thm:translated_feature_geometry}.
\end{step1}

\begin{step2}[Existence of minimizing sequences]
Conversely, if \(u^\ast\in\mathcal C_P\), strong approximation by elements of
\(\mathcal M_P\) and continuity of \(J\) give a sequence with
\(J(u_n)\to J(u^\ast)\). Such a sequence exists if and only if the constrained
infimum equals \(J(u^\ast)\).
\end{step2}
\end{proof}

\begin{proof}[Proof of Corollary~\ref{cor:completed_variational_attainment}]
\begin{step1}[Variational attainment]
By Theorem~\ref{thm:translated_feature_geometry}, \(\mathcal C_P\) is weakly
sequentially closed. Since \(0\in\mathcal C_P\), we can choose a minimizing
sequence, which is bounded by coercivity. Weak compactness in \(H\) and
weak lower semicontinuity then yield a minimizer in the completed class.
\end{step1}

\begin{step2}[Equality of the infima]
The equality of the infima follows from strong continuity and the density
of the original class in the completed class.
\end{step2}
\end{proof}

\subsection{Robust nonattainment}
\label{app:robust_nonattainment}

We write \(K_c=K(\cdot-c)\), denote the parameters of \(u_0\) by
\(\eta_0\), and set \(\eta=(a,b,c,(d_j,c_j)_{j=1}^{P-2})\).
For \(t=h^2>0\), define
\begin{equation}\label{eq:robust_pair_chart}
\Psi(\eta,t)=\frac a2\bigl(K_{c-h}+K_{c+h}\bigr)
+\frac b{2h}\bigl(K_{c-h}-K_{c+h}\bigr)+\sum_{j=1}^{P-2}d_jK_{c_j}.
\end{equation}
The following lemma gives a local parametrization of the completed class
and an expression for the norm of the coefficients at positive separation.

\begin{lemma}[Local parametrization near a simple collision]
\label{lem:robust_local_parametrization}
Under the hypotheses of Theorem~\ref{thm:robust_nonattainment},
\(\Psi\) extends to a jointly \(C^2\) map on \(t\ge0\), one-sided at
\(t=0\), with
\[
\Psi(\eta,0)=aK_c+bK_c'+\sum_jd_jK_{c_j},\qquad
\partial_t\Psi(\eta,0)=\frac a2K_c''+\frac b6K_c''',
\]
and \(D\Psi(\eta_0,0)\) is injective. Every sufficiently small closed box
\(\Omega=\Omega_\eta\times[0,t_*]\), with \(\eta_0\) interior to \(\Omega_\eta\), admits
\(r>0\) such that:
\begin{enumerate}[label=\textup{(\roman*)},leftmargin=*]
\item \emph{Local parametrization.} Every \(v\in\mathcal C_P\) with
\(\|v-u_0\|_{H^m}<r\) equals \(\Psi(\eta,t)\) for some \((\eta,t)\in\Omega\).
\item \emph{Representations.} For \((\eta,t)\in\Omega\) with \(t>0\),
\(\Psi(\eta,t)\) is a sum of exactly \(P\) translates at distinct centres,
with nonzero coefficients. This representation in \(\mathcal M_P\) is
unique up to permutation, and its
weight vector satisfies
\begin{equation}\label{eq:robust_weight_identity}
\|w\|_2^2=S(\eta)+\frac{b^2}{2t},\qquad
S(\eta):=\frac{a^2}{2}+\sum_jd_j^2.
\end{equation}
For \(t=0\), \(\Psi(\eta,0)\notin\mathcal M_P\).
\end{enumerate}
\end{lemma}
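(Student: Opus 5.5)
The plan is to handle the smoothness and the Taylor expansion of \(\Psi\) first, then injectivity, then the local covering (i), and finally the representation statements (ii). Write \(F(s)=\tau_sK\), so that \(F\in C^{k}(\mathbb R;H^m)\) with \(k=\max\{P-1,5\}\ge5\) and \(F^{(r)}(s)=(-1)^r\tau_sK^{(r)}\), as in Section~\ref{subsec:prelim_translation}.

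\textbf{Smoothness and Taylor data.} The symmetric and antisymmetric combinations are even in \(h\), and Taylor's formula with integral remainder gives
\[
\tfrac12(F(c-h)+F(c+h))=\int_0^1(1-s)\,\tfrac12\bigl(F''(c+sh)+F''(c-sh)\bigr)\,ds\cdot h^2+F(c),
\]
and similarly for \(\tfrac1{2h}(F(c-h)-F(c+h))\). Substituting \(h=\sqrt t\) produces smooth even integrands. To get a jointly \(C^2\) function of \((c,t)\) on \(t\ge0\), I would use the standard fact that an even \(C^{2j}\) function of \(h\) is a \(C^{j}\) function of \(t=h^2\) (Whitney). Concretely, I would write each piece as \(G(c,0)+tG_1(c)+t^2\int\!\!\int(\ldots)F^{(4)}\) or \(F^{(5)}\); this is where \(K\in H^{m+5}\) is used. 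Matching the \(h^0\) and \(h^2\) coefficients then gives
\[
\Psi(\eta,0)=aK_c+bK_c',\qquad \partial_t\Psi(\eta,0)=\tfrac a2K_c''+\tfrac b6K_c''',
\]
since \(-F'=\tau K'\) and \(-F'''/6=\tau K'''/6\).

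\textbf{Injectivity.} The partial derivatives at \((\eta_0,0)\) are as follows:
\begin{itemize}[leftmargin=2.5em,labelsep=0.6em]
\item \(\partial_a\Psi=K_{c_0}\) and \(\partial_b\Psi=K'_{c_0}\);
\item \(\partial_c\Psi=-a_0K'_{c_0}-b_0K''_{c_0}\);
\item \(\partial_{d_j}\Psi=K_{c_{j,0}}\) and \(\partial_{c_j}\Psi=-d_{j,0}K'_{c_{j,0}}\);
\item \(\partial_t\Psi=\tfrac{a_0}2K''_{c_0}+\tfrac{b_0}6K'''_{c_0}\).
\end{itemize}
Because \(b_0\neq0\) and \(d_{j,0}\ne0\), these are triangular combinations of the family \(\{K_{c_0},K'_{c_0},K''_{c_0},K'''_{c_0}\}\cup\{K_{c_{j,0}},K'_{c_{j,0}}\}\). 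That family is linearly independent by the exponential-polynomial argument of Section~\ref{subsec:prelim_translation}, so \(D\Psi(\eta_0,0)\) is injective. Injectivity is an open condition and \(\Psi\) is \(C^1\) on the compact box, so \(\Psi\) is a homeomorphism of a small box onto its image, with a uniform lower Lipschitz bound.

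\textbf{(i), the main obstacle.} I would argue by contradiction: take \(v_n\in\mathcal C_P\) with \(v_n\to u_0\) that are not in \(\Psi(\Omega)\). By Theorem~\ref{thm:translated_feature_geometry} each \(v_n\) is a derivative sum of multiplicity at most \(P\). Approximating by elements of \(\mathcal M_P\) and applying Lemma~\ref{lem:translated_cluster_decomposition}, or working directly with the Newton coordinates \(E_r\), I would show that the centres and coefficients of \(v_n\) converge to a configuration representing \(u_0\).

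Two points carry this step. First, linear independence forces the limit configuration to be the one of \(u_0\): multiplicity \(2\) at \(c_0\), multiplicity \(1\) at each \(c_{j,0}\), and the total budget \(P\) is used up, so no mass can escape to infinity or be split off. Second, a multiplicity-\(2\) cluster near \(c_0\) with nodes \(c\pm h\) (or coincident nodes) is exactly \(\Psi(\eta,h^2)\), after recentring at the midpoint with \(a,b\) read off from the Newton coordinates \(E_0,E_1\). There is one subtlety, which I expect to be the hardest part: the two nodes could a priori be \(c\) and \(c+2ih\) with complex \(h\), but real nodes force \(t\ge0\). With both points in hand, the \(v_n\) lie in \(\Psi(\Omega)\) for large \(n\), a contradiction.

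\textbf{(ii).} For \(t>0\) the map \(\Psi\) expands as a sum of \(P\) translates with weights
\[
\frac a2\pm\frac b{2h}\quad\text{and}\quad d_j,
\]
at the distinct centres \(c\pm h\) and \(c_j\). These weights are nonzero on a small box because \(b\approx b_0\neq0\) and \(h\) is small. Uniqueness of the representation follows from linear independence of distinct translates. Computing the weight norm gives
\[
\|w\|_2^2=2\Bigl(\frac{a^2}4+\frac{b^2}{4t}\Bigr)+\sum_j d_j^2=S(\eta)+\frac{b^2}{2t}.
\]
For \(t=0\), the function \(\Psi(\eta,0)\) contains \(bK'_c\) with \(b\ne0\). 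Any representation in \(\mathcal M_P\) would therefore produce a nontrivial linear relation among translates and \(K'_c\), which contradicts linear independence.
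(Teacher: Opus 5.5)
Your proposal is correct and follows essentially the same route as the paper. The paper likewise uses Whitney's even-function argument for the $C^2$ extension, and gets injectivity from the linear independence of $K_{c_0},K'_{c_0},K''_{c_0},K'''_{c_0}$ together with the singleton translates and their derivatives. It proves (i) by the cluster decomposition with the same multiplicity count: two nodes converge to $c_0$, one to each $c_{j,0}$, and none escape. It proves (ii) from independence of translates.

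There are two small differences. First, the paper proves (i) for sequences in $\mathcal M_P$ and then passes to $\mathcal C_P$ by compactness of the box and continuity of $\Psi$. Your direct argument in $\mathcal C_P$ needs one of two completions: the decomposition redone with repeated Newton nodes, or closedness of $\Psi(\Omega)$ to pass from $\mathcal M_P$-approximants back to $v_n$. Second, your ``complex $h$'' concern never arises, because all nodes are real and $t=h^2\ge0$ automatically.
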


Write \(F(s)=K_s\), so that \(F\in C^5(\mathbb R;H^m(\mathbb R))\) for
\(K\in H^{m+5}\), with \(F'=-\tau_sK'\), \(F''=\tau_sK''\) and
\(F'''=-\tau_sK'''\).

\begin{proof}[Proof of Lemma~\ref{lem:robust_local_parametrization}]
\begin{step1}[Smoothness and injectivity]
The functions
\(\tfrac12[F(c-h)+F(c+h)]\) and \([F(c-h)-F(c+h)]/(2h)\) are even and
\(C^4\) in \(h\); for the second, use the Hermite--Genocchi formula
\([F(c-h)-F(c+h)]/(2h)=-\int_0^1F'(c-h+2hs)\,ds\).
An even \(C^4\) function of \(h\) is \(C^2\) in \(t=h^2\ge0\), jointly
in the other parameters. Taylor expansion through \(h^2\) gives the
stated boundary values of \(\Psi\) and \(\partial_t\Psi\).

Since \(\partial_cK_c=-K_c'\), \(b_0\ne0\) and
\(d_{j,0}\ne0\), the range of \(D_\eta\Psi(\eta_0,0)\) is
\[
T=\operatorname{span}\{K_{c_0},K_{c_0}',K_{c_0}'',
K_{c_{j,0}},K_{c_{j,0}}':1\le j\le P-2\}.
\]
By linear independence (Section~\ref{subsec:prelim_translation}),
\(\dim T=2P-1\), so \(D_\eta\Psi(\eta_0,0)\) is injective and
\(K_{c_0}'''\) does not belong to \(T\). Thus
\(q:=(I-\Pi_T)\partial_t\Psi(\eta_0,0)\ne0\), where \(\Pi_T\) is the
orthogonal projection onto \(T\), and \(D\Psi(\eta_0,0)\) is injective.
\end{step1}

\begin{step2}[Choice of the parameter domain]
Choose \(\Omega\) small enough that \(b\) and the \(d_j\) remain
nonzero throughout the domain, the centres \(c,c_j\) remain pairwise separated by some
\(\delta>0\), and \(t_*<\delta^2/4\) with
\(|b|/(2\sqrt{t_*})>|a|/2\). Then, for \(t>0\), the ordinary pair
coefficients \(a/2\pm b/(2\sqrt t)\) are nonzero and the \(P\) centres
\(c\pm\sqrt t\), \(c_j\) are distinct.
\end{step2}

\begin{step3}[Representations]
For \(t>0\), if \(\Psi(\eta,t)\) also equals
\(\sum_{k\le P}w_k'K_{z_k}\), linear independence of translates implies that the
\(P\) translates and their coefficients agree up to permutation. Substituting
the weights \(a/2\pm b/(2\sqrt t)\) and \(d_j\) yields~\eqref{eq:robust_weight_identity}.
At \(t=0\), a representation \(\Psi(\eta,0)=\sum_{k\le P}w_k'K_{z_k}\)
would express \(K_c'\), whose coefficient \(b\) is nonzero, as a linear
combination of finitely many translates, contradicting independence.
\end{step3}

\begin{step4}[Local parametrization]
We first consider a sequence \(v_n\in\mathcal M_P\) converging to \(u_0\).
We show that eventually \(v_n=\Psi(\eta_n,t_n)\), with
\((\eta_n,t_n)\to(\eta_0,0)\). It suffices to establish this on a further
subsequence of every subsequence. After combining terms with the same
centre and discarding those with zero coefficient, apply
Lemma~\ref{lem:translated_cluster_decomposition}.
The profile \(\tau_{c_A}g_A=u_0\) associated with a finite reference centre
must agree with the unique derivative representation of \(u_0\). The
derivative term at \(c_0\) requires at least two centres in its cluster,
and each singleton requires one. Together, these account for all \(P\)
terms: exactly two centres converge to \(c_0\), one to each \(c_{j,0}\),
and none escape or remain unused.

Write the colliding centres as \(c_n\pm h_n\), \(h_n>0\). In terms of their Newton
coordinates \(\gamma_{0,n},\gamma_{1,n}\), the local coordinates are
\(a_n=\gamma_{0,n}\), \(b_n=\gamma_{0,n}h_n-\gamma_{1,n}\).
The cluster decomposition lemma ensures that these coordinates, the
singleton weights and all centres converge. Hence
\(v_n=\Psi(\eta_n,h_n^2)\) with \((\eta_n,h_n^2)\to(\eta_0,0)\).

It follows by contradiction that, for some \(r>0\), every element of the original class
in \(B_{2r}(u_0)\) has coordinates in \(\Omega\). Now approximate any
\(v\in\mathcal C_P\cap B_r(u_0)\) by elements of the original class \(v_n\to v\),
eventually in \(B_{2r}(u_0)\). By compactness of \(\Omega\), the coordinates
converge along a subsequence. Continuity then implies
\(v=\Psi(\eta,t)\). This proves (i) for every sufficiently small parameter box.
\end{step4}
\end{proof}

\begin{proof}[Proof of Theorem~\ref{thm:robust_nonattainment}]
Let \(\Omega\) and \(r\) be as in Lemma~\ref{lem:robust_local_parametrization},
and set \(F(\eta,t):=J(\Psi(\eta,t))\).

\begin{step1}[Uniform convexity in local coordinates]
At \((g,\eta,t)=(u_0,\eta_0,0)\), the residual \(\Psi-g\) vanishes.
The Hessian is therefore \(D\Psi^*D\Psi\), positive definite by injectivity
of \(D\Psi(\eta_0,0)\). After shrinking \(\Omega\) and \(r\), continuity ensures
that the Hessian remains uniformly positive on \(\Omega\) for all \(g\) close to \(u_0\).
\end{step1}

\begin{step2}[An open set of nonattainment]
For the target \(g_\varepsilon=u_0-\varepsilon q\), at \((\eta_0,0)\),
\(D_\eta F=0\) and
\(\partial_tF=\varepsilon\|q\|_H^2>0\). For small
\(\varepsilon>0\), the implicit-function theorem for
\(D_\eta F(\eta,0)=0\), with the target as a parameter, yields a \(C^1\)
map \(\eta_\ast\), with values in the interior of \(\Omega_\eta\), defined on an open ball \(U\) about
\(g_\varepsilon\) small enough
that \(\ell:=\partial_tF(\eta_\ast,0)>0\). Uniform positivity of the
Hessian yields \(\mu>0\) with
\begin{equation}\label{eq:robust_local_energy_bound}
F(\eta,t)-F(\eta_\ast,0)
\ge \ell t+\mu\bigl(\|\eta-\eta_\ast\|_2^2+t^2\bigr)
\qquad ((\eta,t)\in\Omega,\ g\in U).
\end{equation}
By Corollary~\ref{cor:completed_variational_attainment}, \(J\) has a
minimizer \(v\in\mathcal C_P\); comparison with \(u_0\) implies
\(\|v-u_0\|_H\le2\|g-u_0\|_H\), which is less than \(r\) when
\(\varepsilon\) and the radius of \(U\) are sufficiently small.
Lemma~\ref{lem:robust_local_parametrization}\textup{(i)}
and \eqref{eq:robust_local_energy_bound} show that \(u_\ast=\Psi(\eta_\ast,0)\)
is the unique minimizer. It is not in \(\mathcal M_P\) by
Lemma~\ref{lem:robust_local_parametrization}\textup{(ii)}, and
\(J(u_\ast)>0\), since \(g=u_\ast\) would force \(\ell=0\). The equality of
the infima follows from density. The minimizer varies \(C^1\) with the target, and both
\(\varepsilon\) and the radius of \(U\) can be arbitrarily small.
\end{step2}

\begin{step3}[The sharp coefficient bound]
Let \(E(R)\) denote the coefficient-capped infimum in
\eqref{eq:robust_coefficient_budget}.
Fix \(g\in U\), and write
\(\eta_\ast=(a,b,c,(d_j,c_j)_{j=1}^{P-2})\).
For large \(R\), taking \(\eta=\eta_\ast\) and
\(t_R=b^2/[2(R^2-S(\eta_\ast))]\) in~\eqref{eq:robust_weight_identity},
Taylor expansion yields
\(E(R)\le F(\eta_\ast,t_R)-F(\eta_\ast,0)=\ell t_R+O(t_R^2)\), so
\(\limsup_{R\to\infty}R^2E(R)\le\ell b^2/2\).

For the matching lower bound, given \(R_n\to\infty\), choose ordinary
states \(v_n\) with \(\|w_n\|_2\le R_n\)
and excess energy at most \(E(R_n)+o(R_n^{-2})\). The excess energies tend to
zero, so
\(\|v_n-u_0\|_H\le\sqrt{2J(v_n)}+\|g-u_0\|_H\le2\|g-u_0\|_H+o(1)<r\).
By Lemma~\ref{lem:robust_local_parametrization}, \(v_n\) has local coordinates
in the parameter domain, and \eqref{eq:robust_local_energy_bound} implies \(\eta_n\to\eta_\ast\),
\(t_n\to0\). By the uniqueness in
Lemma~\ref{lem:robust_local_parametrization}\textup{(ii)},
\eqref{eq:robust_weight_identity} holds for these coefficients, so
\(R_n^2t_n\ge b_n^2/2\) and
\(\liminf_nR_n^2E(R_n)\ge\liminf_n\ell R_n^2t_n\ge\ell b^2/2\).
Hence \(\lim_{R\to\infty}R^2E(R)=C:=\ell b^2/2>0\).
\end{step3}
\end{proof}

\subsection{Separated centres in general dimension}
\label{subsec:translated_weight_control_appendix}

Uniform separation of the centres yields a bound on the weights in any
dimension. Here \(K\)
satisfies Assumption~\ref{asu:kernel}, and \(\mathcal M_P\) consists of
\(u_\theta=\sum_{k=1}^Pw_k\tau_{b_k}K\), with \(w_k\in\mathbb R\) and
\(b_k\in\mathbb R^d\). Write
\(\delta(b):=\min_{k\ne\ell}|b_k-b_\ell|\), taking \(\delta(b)=+\infty\)
when \(P=1\).

\begin{lemma}[Separated centres and forced collisions]
\label{lem:translated_weight_control}
Let \(d,P\ge1\) be integers.
\begin{enumerate}[label=\textup{(\roman*)},leftmargin=*]
\item For every \(s>0\) there is \(C_s<\infty\) such that
\(\|w\|_2\le C_s\|u_\theta\|_{H^m(\mathbb R^d)}\) whenever
\(\delta(b)\ge s\).
\item Let \(P\ge2\). Then \(\delta(b_n)\to0\) if and only if there are
\(w_n\in\mathbb R^P\) with \(\|w_n\|_2=1\) and
\(\|\sum_kw_{k,n}\tau_{b_{k,n}}K\|_{H^m}\to0\).
\item Let \(P\ge2\). If \(u_n=\sum_kw_{k,n}\tau_{b_{k,n}}K\to u\) strongly
in \(H^m(\mathbb R^d)\) and \(u\notin\mathcal M_P\), then
\(\|w_n\|_2\to\infty\) and \(\delta(b_n)\to0\).
\end{enumerate}
\end{lemma}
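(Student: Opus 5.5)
The plan is to work on the Fourier side, where the Sobolev norm is \(\int p_m|\widehat u_\theta|^2\,d\xi\) and
\[
\widehat u_\theta(\xi)=\widehat K(\xi)\sum_k w_ke^{-ib_k\cdot\xi}.
\]
Since \(p_m\ge1\), we have
\[
\|u_\theta\|_{H^m}^2\ge\int|\widehat K(\xi)|^2\Bigl|\sum_kw_ke^{-ib_k\cdot\xi}\Bigr|^2d\xi .
\]
Because \(\widehat K\) is continuous (\(K\in L^1\)) and nowhere zero, \(|\widehat K|^2\ge\kappa_\rho>0\) on every ball \(B_\rho\). This reduces (i) to a uniform lower bound for exponential sums with \(s\)-separated frequencies on a fixed ball. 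That is an Ingham-type inequality: for \(\rho\) large relative to \(1/s\),
\[
\int_{B_\rho}\Bigl|\sum_k w_ke^{-ib_k\cdot\xi}\Bigr|^2d\xi\ge c(\rho,s,d)\|w\|_2^2 .
\]
I would prove it with a smooth nonnegative weight \(\phi\le\mathbf 1_{B_\rho}\) whose Fourier transform is concentrated, giving
\[
\int\phi\Bigl|\sum_k w_ke^{-ib_k\cdot\xi}\Bigr|^2=\sum_{k,l}w_k\bar w_l\widehat\phi(b_k-b_l).
\]
One then uses that \(\widehat\phi(0)\) dominates \(\sum_{l\ne k}|\widehat\phi(b_k-b_l)|\) thanks to the separation and the decay of \(\widehat\phi\), as in the Schur test.

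This step is the main obstacle. The number of points is fixed at \(P\), but the centres are unbounded, so the off-diagonal sum must be controlled by \(P\) times \(\sup_{|y|\ge s}|\widehat\phi(y)|\) relative to \(\widehat\phi(0)\). That requires choosing \(\phi\) depending on \(s\) and \(P\). A simpler route, exploiting that \(P\) is fixed, is compactness by clustering, in the spirit of Lemma~\ref{lem:translated_cluster_decomposition}. Suppose (i) fails. Then there are normalized \(w_n\) and \(s\)-separated \(b_n\) with \(\|u_{\theta_n}\|\to0\). Pass to a subsequence so that the pairwise differences converge in \([-\infty,\infty]^d\), and group the centres into clusters with bounded mutual distances. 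The cross-cluster Gram entries \(\langle\tau_{b_k}K,\tau_{b_l}K\rangle_{H^m}\) tend to \(0\) by correlation decay in \(C_0\). Within a cluster the relative centres converge to distinct limits, which remain \(s\)-separated. The limiting Gram matrix of distinct translates is positive definite by linear independence: \(\widehat K\ne0\) everywhere, and nontrivial exponential sums do not vanish identically. The Gram matrices are therefore eventually uniformly positive definite, which contradicts \(\|u_{\theta_n}\|\to0\). I would present this compactness argument and mention the Ingham bound only as a remark.

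For (ii), the direction \(\Leftarrow\) follows from (i) by contraposition. If \(\delta(b_n)\not\to0\), a subsequence has \(\delta\ge s\), so \(\|u\|\ge\|w\|/C_s=1/C_s\). For \(\Rightarrow\), take the closest pair \(b_{k,n},b_{l,n}\) and set \(w=(e_k-e_l)/\sqrt2\). Then
\[
\|\tau_{b_k}K-\tau_{b_l}K\|_{H^m}^2=\int p_m|\widehat K|^2|e^{-ib_k\xi}-e^{-ib_l\xi}|^2d\xi\to0
\]
by dominated convergence, or simply by continuity of translation in \(H^m\).

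For (iii), strong convergence bounds \(\|u_n\|\). If \(\|w_n\|_2\) had a bounded subsequence, then by (i) (if \(\delta\) stays away from \(0\)), or directly, we could extract convergent coefficients. The remaining difficulty is centres escaping to infinity. Apply the clustering decomposition above. Escaping clusters converge weakly to \(0\), and since the limit is strong, the local clusters with convergent centres give
\[
u=\sum w_k\tau_{b_k}K\in\mathcal M_P ,
\]
a contradiction. Concretely, if \(\|w_n\|\) is bounded and \(\delta(b_n)\ge s\) along a subsequence, the cluster argument shows \(u\) is a limit of at most \(P\) translates with convergent weights. Escaping parts vanish weakly, and the strong limit must equal the sum over finite centres, which lies in \(\mathcal M_P\). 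Hence \(\delta(b_n)\to0\). Finally, \(\|w_n\|\to\infty\) follows as in Theorem~\ref{thm:tanh_forced_collision}(i): bounded weights along a subsequence, combined with the same clustering argument (merged centres simply add weights), again yield \(u\in\mathcal M_P\).
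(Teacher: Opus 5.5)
Your proof is correct. Parts (ii) and (iii) follow the paper's argument essentially step for step: you take the closest pair with weights $\pm2^{-1/2}$, and you show that bounded weights along a subsequence force $u\in\mathcal M_P$, because convergent centres give a strong limit in $\mathcal M_P$ and escaping centres vanish weakly. Part (i) then turns separation into bounded weights. In (iii) the cluster decomposition is more than you need. With bounded weights it suffices to pass to a subsequence on which each individual centre converges or diverges in norm, as the paper does.

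The real difference is in (i).

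\textbf{The paper's route.} It applies the Ingham--Beurling inequality on a ball of radius $R>2\sqrt{\mu_d}/s$ and multiplies by $\kappa_m(R)=\inf_{|\xi|\le R}p_m|\widehat K|^2$. This gives the explicit constant $C_s=[\kappa_m(R)C_d(R,s)]^{-1/2}$. Your concern that this route needs a weight adapted to $P$ is unnecessary. That inequality holds for arbitrary, even infinite, $s$-separated families, with a constant independent of their cardinality, so the paper's $C_s$ is uniform in $P$ and in the configuration.

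\textbf{Your route.} You argue by contradiction with normalized weights and split the centres into clusters with convergent relative positions. The Gram matrices then converge to a block-diagonal limit: cross-cluster entries vanish by correlation decay, and $s$-separation keeps the limiting relative centres distinct. The limit is positive definite by linear independence of distinct translates. This is valid and self-contained, but the constant is non-explicit and depends on $P$.

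\textbf{What each route buys.} Your argument needs much less of the kernel than you state. Linear independence of distinct translates only requires $\widehat K\ne0$ on a set of positive measure, since the exponential sum is real-analytic and cannot vanish on such a set unless it is identically zero. This holds for every nonzero $K\in H^m(\mathbb R^d)$, so your route proves all three parts without Assumption~\ref{asu:kernel}. The Ingham route, by contrast, genuinely needs $|\widehat K|$ bounded below on a ball of radius larger than $2\sqrt{\mu_d}/s$. For instance, it fails for small $s$ when $\widehat K$ has compact support.
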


\begin{proof}
\begin{step1}[Uniform weight bound]
By Assumption~\ref{asu:kernel}, \(\widehat K\) is continuous and nowhere
zero. Hence
\[
\kappa_m(R):=\inf_{|\xi|\le R}p_m(\xi)|\widehat K(\xi)|^2>0.
\]
Let \(\mu_d\) be the first Dirichlet eigenvalue of \(-\Delta\) on the unit
ball. For \(R>R_0(d,s):=2\sqrt{\mu_d}/s\), the Ingham--Beurling inequality
\cite[Theorem~8.1]{komornik2005fourier} provides \(C_d(R,s)>0\) such that
\[
\int_{B(0,R)}\Bigl|\sum_na_ne^{i\lambda_n\cdot\xi}\Bigr|^2\,d\xi
\ge C_d(R,s)\sum_n|a_n|^2
\]
for finitely supported complex \((a_n)\) and \(s\)-separated
\((\lambda_n)\subset\mathbb R^d\). Plancherel then yields
\[
\|u_\theta\|_{H^m}^2
=\int_{\mathbb R^d}p_m|\widehat K|^2
 \Bigl|\sum_kw_ke^{-ib_k\cdot\xi}\Bigr|^2d\xi
\ge\kappa_m(R)\,C_d(R,s)\,\|w\|_2^2,
\]
so \(C_s=[\kappa_m(R)C_d(R,s)]^{-1/2}\) satisfies (i).
\end{step1}

\begin{step2}[Collision characterization]
Part (i) rules out such
\(w_n\) along any subsequence whose separation is bounded below.
Conversely, place weights \(\pm2^{-1/2}\) on a closest pair and zero
elsewhere. The resulting norm is \(2^{-1/2}\|K-\tau_{y_n}K\|_{H^m}\to0\), where \(y_n\)
is the difference of that pair.
\end{step2}

\begin{step3}[Forced collisions]
Suppose that the weights are bounded along a subsequence. Passing to a
further subsequence, we may assume that the weights converge and each
centre either converges or diverges in norm. The terms with
convergent centres converge strongly to some \(v\in\mathcal M_P\), and the
terms whose centres diverge converge weakly to zero. Hence \(u=v\), a contradiction, so
\(\|w_n\|_2\to\infty\). Since \((u_n)\) is bounded, (i) excludes every
subsequence with \(\delta(b_n)\) bounded below.
\end{step3}
\end{proof}

\section{Conversion to derivative blocks}
\label{app:mixed_completion_proof}

\subsection{Stable coordinates}
\label{app:stable_coordinates}

Linear combinations of translates at distinct centres can be expressed in
terms of normalized divided differences. For example, with
\(F(t)=K(\cdot-t)\), we have
\[
 w_1F(t_1)+w_2F(t_2)=\gamma_0E_0+\gamma_1E_1,
 \qquad E_0=F(t_1),\quad E_1=\frac{F(t_2)-F(t_1)}{t_2-t_1},
\]
where \(\gamma_0=w_1+w_2\) and
\(\gamma_1=(t_2-t_1)w_2\). The higher-order Newton functions are
\(E_r=r![t_1,\ldots,t_{r+1}]F\), defined in
\eqref{eq:completed_newton_coordinates} and extended continuously through
repeated centres, with \(E_r(c,\ldots,c)=(-1)^rK^{(r)}(\cdot-c)\).
The following proposition gives bounds for the Newton coefficients and for
the error in the Taylor replacement of Section~\ref{subsec:completion_algorithm}.
The centres may coalesce at different rates.

\subsection{Uniform stability and transfer bounds}
\label{app:stable_transfer_bounds}

\Needspace{16\baselineskip}
\begin{proposition}[Stable conversion through collisions]
\label{prop:mixed_completion}
Let \(0\ne K\in H^{m+P-1}(\mathbb R)\) be real-valued.

\textup{(i)} \textbf{Stable representation.}
Every \(u\in\mathcal C_P\) admits a representation in the Newton
coordinates~\eqref{eq:completed_newton_coordinates} of the form
\[
 u=\sum_{r=0}^{N-1}\gamma_rE_r(t_1,\ldots,t_{r+1}),
 \qquad N\le P.
\]
Here the centres are real and ordered, and may repeat. When they lie in a
fixed bounded interval, a bound on the Sobolev norm of the represented
function gives a uniform bound on its coefficients.

\textup{(ii)} \textbf{Convergence of the transfer.}
Let \(u_h\) be a family of clusters of multiplicity \(N_h\le P\), with uniformly bounded
\(H^m\) norms and diameters tending to zero as \(h\to0\).
Choose \(\beta_h\) between the smallest and largest centres, and let
\(M_{r,h}\) denote the corresponding Taylor moments at \(\beta_h\). Then
\[
 T_h=\sum_{r=0}^{N_h-1}M_{r,h}K^{(r)}(\cdot-\beta_h),
 \qquad \|u_h-T_h\|_{H^m}\longrightarrow0.
\]
The multiplicity of \(T_h\) is at most \(N_h\), irrespective of the
relative rates at which the centres coalesce.
\end{proposition}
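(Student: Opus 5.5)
Both parts rest on the Newton coordinates \eqref{eq:completed_newton_coordinates}. Part (i) follows from the cluster decomposition and the strong closure in Theorem~\ref{thm:translated_feature_geometry}; part (ii) follows from expanding each Newton function about \(\beta_h\) with a Peano remainder in \(C^{P-1}(\mathbb R;H^m)\).

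\emph{Part (i).} By Theorem~\ref{thm:translated_feature_geometry}, every \(u\in\mathcal C_P\) is a finite sum of derivative blocks \(\sum_{q}\sum_{r<\nu_q}w_{q,r}K^{(r)}(\cdot-b_q)\) with \(N=\sum_q\nu_q\le P\).
\begin{itemize}[leftmargin=2.5em,labelsep=0.6em]
\item Order the centres, listing \(b_q\) with multiplicity \(\nu_q\), to get a node vector \(t_1\le\dots\le t_N\).
\item The change from Hermite data \(F^{(i)}(b_q)\), \(i<\nu_q\), to \(E_0,\dots,E_{N-1}\) is triangular and invertible (Section~\ref{subsec:prelim_translation}). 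Hence \(u=\sum_r\gamma_rE_r(t_1,\dots,t_{r+1})\), which gives existence.
\item For the coefficient bound, consider the Gram matrix \(G(t)\) of \(E_0,\dots,E_{N-1}\) on the compact set \(\{t\in[-L,L]^N\ \text{ordered}\}\). Each \(E_r\) is continuous in the nodes, including at coincidences, by the Hermite--Genocchi formula, so \(G\) is continuous.
\item \(G(t)\) is positive definite at every node vector. Indeed, the \(E_r(t)\) span the same space as the Hermite family at the distinct values of \(t\), and that family is linearly independent by the exponential-polynomial argument.
\item Compactness then gives \(\lambda_{\min}(G)\ge\eta>0\) uniformly, and therefore \(|\gamma|\le\eta^{-1/2}\|u\|_{H^m}\). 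The argument runs separately for each \(N\le P\), which is a finite list.
\end{itemize}

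\emph{Part (ii).} Fix \(u_h=\sum_r\gamma_{r,h}E_r(t_{1,h},\dots,t_{r+1,h})\) with all nodes in \([\beta_h-\delta_h,\beta_h+\delta_h]\) and \(\delta_h\to0\).
\begin{itemize}[leftmargin=2.5em,labelsep=0.6em]
\item Translate by \(\beta_h\), which is an \(H^m\) isometry. After this shift the nodes lie in \([-1,1]\) for small \(h\), and part (i) bounds \(\gamma_{r,h}\) uniformly.
\item Expand each \(E_r=\mathbb E\,F^{(r)}(\beta_h+\sum_jS_j(t_j-\beta_h))\) by Taylor's formula for \(F^{(r)}\) about \(\beta_h\) up to order \(N_h-1-r\). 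The polynomial part is a combination of \(F^{(s)}(\beta_h)=(-1)^s\tau_{\beta_h}K^{(s)}\) for \(s<N_h\).
\item Collecting terms, this polynomial part is exactly the Taylor-moment replacement \eqref{eq:feature_taylor_replacement}. Summed, it gives \(T_h\), which has multiplicity at most \(N_h\).
\item The remainder of each \(E_r\) is \(o(\delta_h^{N_h-1-r})\) in \(H^m\), by uniform continuity of \(F^{(N_h-1)}\) on compact sets; this uses \(K\in H^{m+P-1}\).
\item Since the \(\gamma_{r,h}\) are bounded, \(\|u_h-T_h\|_{H^m}\le C\sum_r|\gamma_{r,h}|\,o(1)\to0\). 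No rate assumption on the coalescence enters, because only the diameter \(\delta_h\) appears.
\end{itemize}

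\emph{Main obstacle.} The hardest step is identifying the Taylor moments of the original representation with the Newton-coordinate expansion. If \(u_h\) is given in ordinary weights \(w_k\), the moments \(M_{r,h}=\sum_kw_k(\beta_h-t_k)^r/r!\) involve weights that may blow up. So I will define the moments through the Newton form, with bounded \(\gamma\). I will then check algebraically that the two definitions agree: expanding \(F(t_k)\) about \(\beta_h\) and taking divided differences commute, since both operations are linear and exact on polynomials of degree less than \(N_h\). This agreement is what makes the transfer well defined independently of the representation.
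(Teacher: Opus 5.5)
Your proposal is correct and follows essentially the same route as the paper. Part (i) uses the closure theorem, the triangular Hermite-to-Newton change of basis, and compactness of the Gram matrix over ordered nodes in a bounded interval. Part (ii) translates to \(\beta_h=0\), bounds the Newton coefficients by part (i), and controls \(\sum_r\gamma_r\,r![t_1,\ldots,t_{r+1}](F-p_\beta)\) through the Hermite--Genocchi formula and the translation modulus of \(K^{(N-1)}\). Your Taylor expansion of \(F^{(r)}\) inside each Hermite--Genocchi average is exactly the paper's remainder \(R=F-p_\beta\). Your check that the two definitions of the moments agree is the paper's observation that applying the cluster's linear functional to \(p_\beta\) yields \(T_{\mathcal C}\).
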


\begin{proof}
\begin{step1}[Uniform bounds for the coefficients]
Set \(H=H^m(\mathbb R)\) and order the nodes
\(t_1\le\cdots\le t_N\), repeating each centre according to its multiplicity,
with \(N\le P\).
The Newton basis depends continuously on the nodes and spans the same
space as the linearly independent Hermite data
(Section~\ref{subsec:prelim_translation}). Its Gram matrix is therefore
positive definite. By compactness, for every \(B>0\),
\begin{equation}\label{eq:completed_synthesis_stability}
 \lambda_{N,B}\|\gamma\|_2^2
 \le\Bigl\|\sum_{r=0}^{N-1}\gamma_rE_r(t_1,\ldots,t_{r+1})\Bigr\|_H^2
 \le\Lambda_{N,B}\|\gamma\|_2^2
 \quad(-B\le t_1\le\cdots\le t_N\le B),
\end{equation}
with \(0<\lambda_{N,B}\le\Lambda_{N,B}<\infty\). Together with
Theorem~\ref{thm:translated_feature_geometry}, this proves (i).
\end{step1}

\begin{step2}[Estimate of the transfer error]
To prove (ii), write
\(u_{\mathcal C}=\sum_i c_iK^{(r_i)}(\cdot-b_i)
=\sum_{r=0}^{N-1}\gamma_rE_r\). Choose \(\beta\) in the convex hull of
the centres and define
\begin{equation}\label{eq:mixed_moment_transfer}
 T_{\mathcal C}=\sum_{s=0}^{N-1}M_sK^{(s)}(\cdot-\beta),
 \qquad M_s=\sum_{i:r_i\le s}
 c_i\frac{(\beta-b_i)^{s-r_i}}{(s-r_i)!}.
\end{equation}
Set \(\rho=\max_j|t_j-\beta|\), which is bounded by the cluster diameter, and
\(\omega_{m,j}(\rho)=\sup_{|a|\le\rho}\|K^{(j)}(\cdot-a)-K^{(j)}\|_H\).
We shall prove that
\begin{equation}\label{eq:cancellation_modulus_bound}
 \|u_{\mathcal C}-T_{\mathcal C}\|_H
 \le\omega_{m,N-1}(\rho)
 \sum_{r=0}^{N-1}|\gamma_r|
       \frac{\rho^{N-1-r}}{(N-1-r)!}.
\end{equation}
Let \(p_\beta(t)=\sum_{s=0}^{N-1}F^{(s)}(\beta)(t-\beta)^s/s!\) and
\(R=F-p_\beta\). Since \(F^{(s)}(t)=(-1)^s\tau_tK^{(s)}\), applying
the linear combination of derivative evaluations that defines the cluster
to \(p_\beta\) gives \(T_{\mathcal C}\). Expressing this linear functional
in Newton form, we obtain
\[
u_{\mathcal C}-T_{\mathcal C}
=\sum_{r=0}^{N-1}\gamma_r\,r![t_1,\ldots,t_{r+1}]R.
\]
Integrating \(F^{(N-1)}(\cdot)-F^{(N-1)}(\beta)\) exactly \(N-1-r\) times yields
\[
\|R^{(r)}(t)\|_H
\le\omega_{m,N-1}(\rho)\frac{|t-\beta|^{N-1-r}}{(N-1-r)!},
\qquad |t-\beta|\le\rho.
\]
The Hermite--Genocchi formula then gives~\eqref{eq:cancellation_modulus_bound}.

By translation invariance, we may take \(\beta=0\) for each cluster.
For sufficiently small diameters, all nodes lie in \([-1,1]\), and
\eqref{eq:completed_synthesis_stability} gives a uniform bound on
\(\gamma\). Continuity of translations implies
\(\omega_{m,N-1}(\rho)\to0\). Since \(1\le N\le P\), the estimate
also applies when the centres and multiplicities vary along the family.
The resulting derivative block has multiplicity at most \(N\).
\end{step2}
\end{proof}

The constants depend on \(N\) and \(B\), and need not remain uniform as
these increase. The estimates concern the coefficients for fixed centres;
they give no corresponding bound on the conditioning of the optimization
problem when the centres are also varied.

\subsection{Numerical evaluation and implementation}
\label{app:stable_transfer_rule}

Direct evaluation of the moments in~\eqref{eq:mixed_moment_transfer} may
involve cancellation between large coefficients. If the cluster is
represented in Newton coordinates, the same moments can be computed from
bounded coefficients by applying its divided-difference functional to the
Taylor polynomial \(p_\beta\) used in the proof.
The error bound in Algorithm~\ref{alg:basic_completion} concerns the
computed replacement and must account for errors in coefficient conversion,
function evaluation and rounding. Estimate~\eqref{eq:cancellation_modulus_bound} bounds the Taylor
remainder; a computable bound on the kernel modulus and on the coefficient
errors is also required. We do not construct such an estimator here.

The additional experiments with stable coordinates use the symmetric basis
\[
 A_h=\tfrac12[K(\cdot-c-he)+K(\cdot-c+he)],\qquad
 B_h=\frac{K(\cdot-c-he)-K(\cdot-c+he)}{2h},\quad h>0,
\]
and represent each pair as \(u_C=aA_h+bB_h\). As \(h\to0\), the basis
functions converge to \(K(\cdot-c)\) and \(-D_eK(\cdot-c)\).
The transfer therefore keeps the coefficients \((a,b)\) in the limiting
basis, as in~\eqref{eq:experimental_pair_transfer}.
For \(d=1\) and direction \(e=+1\), the coefficients in the basis \((K,K')\)
are \((a,-b)\). In the PDE experiments, pairs are prescribed in advance,
and the change in the represented function is estimated by quadrature
(Appendix~\ref{app:numerics_raw_main}). These measurements are not
rigorous error bounds. The experiments do not treat higher-order collisions
or collisions between features that have not been paired in advance.

\Needspace{10\baselineskip}
\section{Numerical protocols}
\label{sec:numerics_details}

This appendix gives the target functions and the numerical procedures
used in Section~\ref{numerics}. The code repository contains the configurations,
results for each initial state, and instructions for reproducing the experiments.
Errors and changes due to transfer are evaluated numerically.

\subsection{The shallow \texorpdfstring{\(\tanh\)}{tanh} experiment}
\label{app:numerics_tanh_settings}

In the bias-collision experiment, we approximate \(u^*\) from
\eqref{eq:numerics_tanh_objective} with slopes fixed at one, initial
weights \((3.5,1.5)\), and initial biases \((1,-1)\). We perform
8,000 Adam updates with learning rate \(10^{-3}\), followed by L-BFGS,
independently in single and double precision. In the slope-collision
experiment, we approximate \(v^*\) in double precision, starting from
the same weights, zero biases, and slopes \((2,0.5)\). We perform
5,000 Adam updates followed by independent L-BFGS refinements with
iteration limits of 200 and 1,000.

\subsection{Gaussian formulas and numerical minimization}
\label{app:manufactured_1d}

The energies and PDE residuals in
Section~\ref{subsec:manufactured_1d} are computed from analytic expressions
for Gaussian inner products.
For each fixed \(\varepsilon\), write \(J\) for the
functional~\eqref{eq:manufactured_1d_energy}.
For the specified perturbation,
\(q\perp\operatorname{span}\{K,K',K''\}\) and
\(\|q\|_{H^1}^2=13\sqrt\pi/160\), so \(u_0=K'\) is stationary in
the completed two-slot family for each target.

The prescribed pair is
\(D_h=[K(\cdot+h)-K(\cdot-h)]/(2h)\), with
\(h=1/(\sqrt2R)\) and ordinary weight norm \(R\).
Its expansion \(D_h=K'+h^2K'''/6+O_{H^1}(h^4)\) gives
\[
\begin{aligned}
 R^4[J(D_h)-J(u_0)]&\longrightarrow
 \frac{15\sqrt\pi}{512}\quad(\varepsilon=0),\\
 R^2[J(D_h)-J(u_0)]
 &\longrightarrow\frac{\varepsilon}{2}\|q\|_{H^1}^2
 \quad(\varepsilon>0).
\end{aligned}
\]
For \(\varepsilon=0\), the stationary symmetric branch \(bD_h\) subject
to the weight bound, with \(h=b/(\sqrt2R)\), has
\(b=1+7/(40R^2)+O(R^{-4})\) and leading state error
\(q/(2R^2)\); its excess-energy constant is therefore
\(13\sqrt\pi/1280\). For positive perturbations, its leading
\(R^{-2}\) constant equals that of the prescribed pair. These constants
determine the normalizations in Figure~\ref{fig:manufactured_1d}.

For each fixed centre and gap, minimization under the weight bound reduces
to a two-dimensional quadratic trust-region problem. We search over a grid
of centres and gaps and refine the resulting candidates, using analytic
Gram products and 80-digit arithmetic in the refinement to resolve small
energy differences. Every pair attaining the lowest energy found is centred
at zero and antisymmetric, with its weight norm attaining the prescribed bound.
The off-centre and separated pairs tested have larger excess energies.
We also check the Gram calculations by independent integration.
This search does not establish global optimality or show that the tested
perturbations lie in the neighbourhood of
Theorem~\ref{thm:robust_nonattainment}.

\subsection{The main comparison and numerical reliability}
\label{app:numerics_raw_main}

In the main PDE comparison, the ordinary weights are fitted using individual
translated kernels with positive gaps. Completed pairs are represented
directly by kernel and directional-derivative columns. Centres are restricted
to \([-2,2]^2\), angles are unrestricted, and finite center half-gaps satisfy
\(0<h\le1.5\), with no positive lower bound. Both methods use the same
solver settings. In fitting the coefficients, we scale the columns and
accept the solution according to its residual, without ridge regularization
or rank truncation. If the coefficient system is ill-conditioned, the run
is stopped. Gradients with respect to the geometric parameters are evaluated
only after the coefficient fit has been accepted.

Table~\ref{tab:raw_training_quadrature} reports the results of repeating
the optimization at higher quadrature orders.

\begin{table}[H]\centering\small
\caption{Completion retains lower median errors with finer training quadrature.}
\label{tab:raw_training_quadrature}
\begin{tabular}{@{}llcccc@{}}\toprule
 & & \multicolumn{2}{c}{Standard optimization} & \multicolumn{2}{c}{Completion}\\
\cmidrule(lr){3-4}\cmidrule(l){5-6}
Problem & Order & Median \(H^1\) & \(\le10^{-7}\) & Median \(H^1\) & \(\le10^{-7}\)\\\midrule
Poisson & 32 & \(1.8\times10^{-5}\) & 0/10 & \(8.1\times10^{-8}\) & 7/10\\
 & 64 & \(1.9\times10^{-5}\) & 0/10 & \(7.9\times10^{-8}\) & 6/10\\
\midrule
Mat\'ern & 64 & \(1.7\times10^{-5}\) & 0/10 & \(2.3\times10^{-7}\) & 5/10\\
 & 96 & \(1.8\times10^{-5}\) & 0/10 & \(1.5\times10^{-8}\) & 6/10\\
\bottomrule\end{tabular}
\end{table}

Completion uses~\eqref{eq:experimental_pair_transfer}. The conditions for
transfer are checked every 40 accepted iterates and when the optimizer
returns: the full gap must be at most \(0.05\) for Mat\'ern or \(0.12\)
for Poisson, and the absolute \(H^1\) change must be at most \(0.03\),
measured with 64 quadrature points per coordinate. Candidate pairs are
checked again after each transfer and coefficient refit. Each run retains its last
accepted fitted state, including when a trial evaluation fails. Normal
termination of the optimizer does not establish stationarity or global
optimality.

An independent evaluation of the features reproduces the main tables and
figures. For representative finite and completed configurations, comparison
with central differences gives relative gradient discrepancies below
\(1.2\times10^{-8}\) for Mat\'ern and \(1.0\times10^{-8}\) for Poisson.
At each quadrature order used for training, the standard and completion
iterates agree exactly until the first transfer. All measured changes due
to transfer satisfy the tolerance; across both training orders, they are
below \(1.01\times10^{-3}\) for Mat\'ern and \(3.00\times10^{-2}\) for
Poisson. When the returned states are evaluated again with 128 points per
coordinate, their errors remain unchanged at the precision displayed in
the main tables of final approximation errors.
Completion still gives lower
median errors, although no transfer occurs for Mat\'ern initial state~1
at order 96, and both methods return an error of \(4.8\times10^{-4}\).
All ten initial states are included; errors are evaluated independently
with 128 points per coordinate, without further optimization or coefficient
refitting.
For the Mat\'ern problem, refitting the coefficients with the geometry fixed
at that of the target gives relative \(H^1\) errors of about
\(2.3\times10^{-8}\) at training order 64 and \(5.3\times10^{-10}\) at
order 96. This illustrates the effect of training quadrature near the target.

\subsection{The Poisson--Neumann target and initial states}
\label{app:numerics_poisson}

For \(K(y)=e^{-|y|^2/(2s^2)}\), \(s=0.45\), and unit directions
\(e_k=(\cos\theta_k,\sin\theta_k)\), the target is
\[
 u^\ast=\sum_{k=1}^{4}
 \bigl(a_kK(\cdot-c_k)-b_kD_{e_k}K(\cdot-c_k)\bigr)
 +\sum_{j=1}^{2}a_{0j}K(\cdot-c_{0j}),
\]
with
\[
\begin{aligned}
 (c_1,\theta_1,a_1,b_1)&=((-0.55,-0.42),0.34,1.1,-1.3),\\
 (c_2,\theta_2,a_2,b_2)&=((0.47,-0.40),1.26,-0.8,1.5),\\
 (c_3,\theta_3,a_3,b_3)&=((-0.44,0.54),2.04,1.4,0.9),\\
 (c_4,\theta_4,a_4,b_4)&=((0.57,0.43),2.86,-1.2,-1.1).
\end{aligned}
\]
The singletons \((c_{0j},a_{0j})\) are \(((-0.03,0.62),0.9)\) and
\(((0.05,-0.56),-0.7)\). The mean in~\eqref{eq:poisson_energy} is
\(M=\int_\Omega u^\ast\approx-1.16643\).
For initial states 10--19, we perturb the reference pair centres
\((-0.52,-0.46)\), \((0.50,-0.44)\), \((-0.48,0.50)\), \((0.54,0.47)\)
and singleton centres \((0,0.58)\), \((0.02,-0.60)\) by uniform coordinate
offsets in \([-0.10,0.10]\). We perturb the angles \(0.30,1.20,2.10,2.80\)
by offsets in \([-0.18,0.18]\), and sample the center half-gaps uniformly from
\([0.02,0.05]\). The configuration was selected using initial states 0--4
and fixed before initial states 10--19 were sampled.

\Needspace{12\baselineskip}
\subsection{The Mat\'ern target and initial states}
\label{app:numerics_matern}

With \(\Phi\) from~\eqref{eq:matern_kernel} and
\(e_k=(\cos\theta_k,\sin\theta_k)\), the target is
\[
 u^\ast=\sum_{k=1}^{2}
 \bigl(a_k\Phi(\cdot-c_k)-b_kD_{e_k}\Phi(\cdot-c_k)\bigr)
 +a_0\Phi(\cdot-c_0),
\]
where
\[
\begin{aligned}
 (c_1,\theta_1,a_1,b_1)&=((-0.32,-0.27),0.40,1.2,1.5),\\
 (c_2,\theta_2,a_2,b_2)&=((0.33,0.27),1.75,-0.9,1.1),\\
 (c_0,a_0)&=((0,0.05),0.6).
\end{aligned}
\]
The loads are those of~\eqref{eq:matern_energy}. For initial states 0--9,
we perturb the reference pair centres \((-0.3,-0.25)\), \((0.3,0.25)\)
by independent uniform coordinate offsets in \([-0.12,0.12]\), and the
angles \(0.35,1.65\) by offsets in \([-0.2,0.2]\). The singleton centre
is sampled uniformly from \([-0.12,0.12]^2\), and the center half-gaps are sampled
uniformly from \([0.02,0.05]\).

\subsection{Evaluation accuracy and the choice of coordinates}
\label{subsec:precision_effects}
\label{app:numerics_precision}
\label{app:stable_matern_controls}
\label{app:stable_poisson_controls}

We use prescribed Gaussian difference quotients for the first and third
derivatives to study evaluation accuracy independently of optimization.
Compared with a 100-digit reference, stable evaluation gives relative
\(H^1(-12,12)\) errors that level off near \(0.3\,\varepsilon_{\rm mach}\)
as the gaps shrink, in both single and double precision. Direct summation
loses accuracy in this limit.

Additional optimization experiments use stable pair coordinates throughout,
with four variants: retaining finite pairs, restarting without transfer,
optimizing \(h^2\), and completing eligible pairs. In exact arithmetic,
the finite-gap objectives after coefficient refitting agree with those
in ordinary coordinates; differences arise from numerical arithmetic and
the parametrization of the geometry. At the main training orders,
the stable finite-pair and completed methods have median errors
\(5.3\times10^{-8}\) and \(3.5\times10^{-8}\) for Mat\'ern, and
\(9.2\times10^{-7}\) and \(1.2\times10^{-7}\) for Poisson.
Using the squared gap as a variable gives \(3.6\times10^{-8}\) and
\(1.4\times10^{-7}\), respectively. The relative performance depends on
the problem and quadrature order: for Mat\'ern at training order 96, the
median error with the squared-gap variable is \(1.3\times10^{-8}\),
compared with \(2.1\times10^{-7}\) for stable completion. These results
illustrate how coordinates and quadrature affect optimization. Full results
are available in the repository.

\FloatBarrier
\end{document}